\theoremstyle{plain}
\newtheorem{thm}{Theorem}[section]
\newtheorem{thm*}{Theorem}[section]
\newtheorem{prop}[thm]{Proposition}
\newtheorem{lemma*}{Lemma}
\newtheorem{question}[thm]{Question}
\theoremstyle{definition}
\newtheorem{defn}[thm]{Definition}
\newtheorem{remark}[thm]{Remark}
\newtheorem{note}[thm]{Notation}
\newtheorem*{remark*}{Remark}
\newtheorem{ex}[thm]{Example}
\numberwithin{equation}{thm}
\newcommand{\bR}{\mathbb R}
\newcommand{\bC}{\mathbb C}
\newcommand{\cN}{\mathcal N}
\def\Spec{\operatorname{Spec}\nolimits}
\def\Proj{\operatorname{Proj}\nolimits}
\newcommand{\bG}{\mathbb G}
\newcommand{\cO}{\mathcal O}
\newcommand{\bA}{\mathbb A}
\newcommand{\bZ}{\mathbb Z}
\newcommand{\bF}{\mathbb F}
\newcommand{\cC}{\mathcal C}
\newcommand{\cE}{\mathcal E}
\newcommand{\fg}{\mathfrak g}
\newcommand{\fh}{\mathfrak h}
\newcommand{\gl} {\mathfrak {gl}}
\newcommand{\ol}{\overline}
\newcommand{\ul}{\underline}
\def\Spec{\operatorname{Spec}\nolimits}
\def\sl2{\operatorname{SL_{2(2)}}\nolimits}
\def\Ga2{\operatorname{\mathbb G_{\rm a(2)}}\nolimits}
\newcommand{\Z}{\mathbb Z}
\newcommand{\bN}{\mathbb N}
\newcommand{\bu}{\bullet}
\date\today
\begin{document}

\title[Rational Cohomology and supports]{Rational cohomology and supports for Linear Algebraic Groups}
 
 \author[ Eric M. Friedlander]
{Eric M. Friedlander} 

\address {Department of Mathematics, University of Southern California,
Los Angeles, CA}
%%% Email address is optional.
\email{ericmf@usc.edu}
%\email{eric@math.northwestern.edu}

%\thanks{$^{*}$ partially supported by the NSA}

%\subjclass[2010]{20G05, 20C20, 20G10}

%\keywords{rational cohomology, unipotent algebraic groups}

\maketitle

\centerline{ Dedicated to David J. Benson}

\section{Introduction}

What follows are rough ``notes" based upon four lectures given by the 
author at PIMS in Vancouver over the period June 27 to June 30, 2016.
\begin{itemize}
\item
Lecture I.  Affine groups schemes over $k$.
\item
Lecture II.  Algebraic representations.
\item
Lecture III.  Cohomological support varieties.
\item
Lecture IV.  Support varieties for linear algebraic groups.
\end{itemize}
\vskip .1in

The primary goal of these lectures was to publicize the author's recent 
efforts to extend to representations of linear algebraic groups the 
``theory of support varieties" which has proved successful in the study
of representations of finite group schemes.  The first two lectures 
offer a quick review of relevant background for the study of affine group
schemes and their representations.  The third lecture discusses
cohomological support varieties of finite group schemes and mentions 
challenges to extending this theory to linear algebraic groups (also
discussed in the last paragraph of this introduction). 
In the fourth and final lecture, we provide an introduction to the author's theory of 
support varieties using 1-parameter subgroups following work of
A. Suslin, C. Bendel, and the author \cite{SFB1}, \cite{SFB2}.   The text
contains a few improvements on results in the literature (see, for example,
Remark \ref{rem:tw}).

We encourage others to follow the work discussed here by
sharpening the formulations, extending general theory, providing much
better computations, and working out many interesting examples.  
Towards the end of Lecture IV, we give a list of various explict problems
which might be of interest to some readers.
We conclude this text by introducing ``formal 1-parameter
subgroups" leading to ``formal support varieties" in Proposition \ref{prop:formal},
a promising but still unexplored structure.

The reader will find undue emphasis on the work of the author together 
with collaborators Chris Bendel, Jon Carlson, 
Brian Parshall, Julia Pevtsova, and Andrei Suslin.  A quick look at
references given will see that numerous other mathematicians have played 
seminal roles in developing support varieties, including Daniel Quillen 
who launched this entire subject.

We conclude this introduction with a brief sketch of the evolution of 
support varieties for a ``group-like object" $G$ and introduce our 
perspective on their role in the study of their representations (which we 
usually refer to as $G$-modules).  

Support varieties emerged from D. Quillen's work \cite{Q1}, \cite{Q2} on the cohomology 
of finite groups.   The reader attracted by homological 
computations might become distracted (as we have been, at times) from our goal of illuminating
representation theory by the numerous puzzles and questions concerning
cohomology which arise from the geometric perspective of support varieties.

In the late 1960's, and even now, few complete calculations of the cohomology
of finite groups were known.  Quillen developed foundations for the
equivariant cohomology theory introduced by A. Borel \cite{Borel}, a key tool in his 
determination of the prime ideal spectrum
of the cohomology algebra $H^*(G,k)$ for {\it any finite group} \cite{Q1},
\cite{Q2}.  This enabled Quillen to answer a question of Atiyah and Swan on 
the growth of projective resolutions of $k$ as a $kG$-module \cite{Swan}.  This is
just one example of Quillen's genius: proving a difficult conjecture by
creating a new context, establishing foundations, and proving a geometric,
refined result which implies the conjecture.  

A decade later, Jon Alperin and Len Evens considered the growth of 
projective resolutions of finite dimensional $kG$-modules \cite{AE1}, 
\cite{AE2}.  They recognized that Quillen's geometric description of the
``complexity" of the trivial module $k$ for $kG$ had an extension to 
arbitrary finite dimensional modules.  Following this, Jon Carlson formulated
in \cite{Ca1} the (cohomological) support variety $|G|_M$ of a finite dimensional 
$kG$-module, a closed subvariety of ``Quillen's variety" $|G|$.  At first
glance, one might think this construction is unhelpful:  one starts with
a $kG$-module $M$ and one obtains invariants of $M$ by considering the 
structure of the $Ext$-algebra $Ext^*_G(M,M)$ as a module over the 
cohomology algebra $H^*(G,k)$.  Yet in the hands of Carlson and others,
this has proved valuable in studying the representation theory of $G$
and more general ``group-like" structures.

One early development in the study of support varieties was an alternative
construction proposed by J. Carlson for an elementary 
abelian $p$-group $E \simeq (\bZ/p)^{\times s}$ and proved equivalent to the cohomological
construction by G. Avrunin and L. Scott \cite{AS}.  J. Carlson's fundamental
insight was to reformulate the cohomological variety
$|E|$ as a geometric object $V(E)$ whose points are related to $kE$ without
reference to cohomology; Carlson then reformulated the support variety $|E|_M$ 
of a finite dimensional $kE$-module in 
``local" terms without reference to homological constructions such as 
the Ext algebra $Ext_E^*(M,M)$.  Only much later was this extended by
J. Pevtsova and the author \cite{FPv1}, \cite{FPv2} to apply not just to 
elementary abelian $p$-groups but to all finite groups; indeed, in doing so, 
Friedlander and Pevtsova formulated this comparison for all finite group
schemes.

This leads us to other ``group-like" objects.  Such a consideration was
foreshadowed by the work of Avrunin-Scott  who solved Carlson's conjecture
by considering a different Hopf algebra (the restricted enveloping algebra
of an abelian Lie algebra with trivial restriction) whose underlying 
algebra is isomorphic to $kE$.   B. Parshall and the author wrote 
a series of papers (see, for example, \cite{FPar1},\cite{FPar2},\cite{FPar3},\cite{FPar4})
introducing and exploring a support theory for the $p$-restricted representations of 
an arbitrary finite dimensional restricted Lie algebra.
This entailed the consideration of the cohomology algebra $H^*(U^{[p]}(\fg),k)$ 
of the restricted enveloping algebra $U^{[p]}(\fg)$; modules for $U^{[p]}(\fg)$
are $p$-restricted representations of $\fg$.

Subject to restrictions on 
the prime $p$, work of Parshall and the author together with work of J. Jantzen \cite{J1}
showed that Carlson's conjecture for elementary abelian $p$-groups 
generalized to any finite dimensional restricted Lie algebra $\fg$, 
comparing $|\fg|_M$ to the generalization $V(\fg)_M$
of Carlson's rank variety defined 
in ``local" representation-theoretic terms rather than using homological 
constructions.  (Subsequently, A. Suslin, C. Bendel and the author formulated
and proved such a comparison for all primes $p$ \cite{SFB2}.)
This comparison enables proofs of properties for the
support variety construction $M \mapsto V(\fg)_M$, some of which  are
achieved using homological methods and some using more geometric, 
representation theoretic methods.

In this paper, ``linear algebraic group over $k$" refers to a reduced, 
irreducible, affine group scheme of finite type over $k$, always assumed to be
of characteristic $p > 0$ for some prime $p$.  For such a linear 
algebraic group $G$,  there is a Frobenius morphism $F: G \to G^{(1)}$; if $G$ is
defined over the prime field $\bF_p$, the Frobenius morphism is an endomorphism
$F: G \to G$.  The kernel of $F$ is a height 1 ``infinitesimal group scheme" of 
finite type over $k$ denoted $G_{(1)}$. 
For a finite group scheme of the form $G_{(r)} \ = \ ker\{ F^r:G \to G^{(r)} \}$
(the $r$-th Frobenius kernel of the linear algebraic group $G$) and a finite 
dimensional $G_{(r)}$-module $M$, A. Suslin, C. Bendel, and the author 
give in \cite{SFB1} and \cite{SFB2}  a representation-theoretic formulation, denoted $V(G_{(r)})_M$,
of  the cohomological support variety $|G_{(r)}|_M$.
This alternate description is formulated in terms of the restriction of $M$ along infinitesimal 1-parameter subgroups 
$\psi: \bG_{a(r)} \to G$.

Finite groups and Frobenius kernels of linear algebraic groups are examples
of finite group schemes.  In \cite{FPv1}, \cite{FPv2}, J. Pevtsova and the author extended the 
theory of support varieties to arbitrary finite group schemes, generalizing
``cyclic shifted subgroups" considered by J. Carlson in the case of
elementary abelian $p$-groups and reinterpreting infinitesimal
1-parameter subgroups considered by A. Suslin, C. Bendel, and the author
in the case of infinitesimal group schemes over $k$.
The finite dimensionality of the cohomology algebra  $H^*(G,k)$ of a finite group
scheme proved by A. Suslin and the author \cite{AS} plays a crucial role in
these theories of cohomological support varieties.  In these extensions
of the original theory for finite groups, one requires a suitable criterion of
the detection modulo nilpotence of elements of  $H^*(G,k)$; for finite groups,
such a detection result is one of D. Quillen's key theorems.

Although many of the basic techniques used in establishing properties
for cohomological support varieties for finite group schemes do not apply
to linear algebraic groups, we have continued to seek a suitable theory of
support varieties for linear algebraic groups.
 After all, a major justification for the consideration of
Frobenius kernels is that the collection $\{ G_{(r)}, r > 0 \}$ has 
representation theory that of $G$ whenever $G$ is a simply connected, simple
linear algebraic group as shown by J. Sullivan \cite{Sull} (see also \cite{CPS2}).  

However, the rational cohomology of a simple algebraic group
vanishes in positive degree, so that cohomological methods do not appear
possible.   Furthermore, if the rational cohomology is non-trivial, it is 
typically not finitely generated.  Finally, there are typically no non-trivial projective 
$G$-modules for a linear algebraic group as shown by S. Donkin \cite{Donk}.
Despite these difficulties, we present in Lecture IV a theory of 
support varieties for linear algebraic groups of ``exponential type".

Throughout these lectures, we use the simpler term ``$G$-module" rather than
than the usual ``rational $G$-module" when referring to a ``rational representation"
of an affine group scheme $G$.
We shall abbreviate $V \otimes_k W$ by $V\otimes W$ for the tensor product
of $k$-vector spaces $V, W$.

%We thank Julia Pevtsova for the opportunity to give these lectures, for her constructive
%comments on the text, and for her many contributions to the mathematics presented here.

\vskip 1in

%%%%%%%%%%%%%%Section1%%%%%%%%%%%%
%%%%%%%%%%%%%%%%%%%%%%%%%%%%%%%

\section{Lecture I: Affine group schemes over $k$}

        This lecture is a ``recollection" of some elementary aspects
of affine algebraic varieties over a field $k$ and a discussion of
group schemes over $k$.   We recommend R. Hartshorne's book
``Algebraic Geometry" \cite{Har} and W. Waterhouse's book ``An introduction to
affine group schemes" \cite{Wat}  for further reading.  

We choose a prime $p$ and consider algebraic varieties over an algebraically
closed field $k$ of characteristic $p > 0$.  
The assumption that $k$ is algebraically closed both simplifies the algebraic 
geometry (through appeals to the Hilbert Nullstellensatz) and simplifies the 
form of various affine group schemes.  Our hypothesis that $k$ is not of 
characteristic 0 is necessary both for the existence of various finite group schemes
and for the non-triviality of various structures.  In Subsection I.3, we discuss some of the special
features of working over such a field rather than working over a 
field of characteristic 0.  In Subsection I.4, we discuss restricted Lie algebras
and their $p$-restricted representations. 

\vskip .1in

        Here is the outline provided to those attending this lecture.
        
        \vskip .1in
\noindent
I.A  Affine varieties over $k$.

        i.)    $k$ a field, alg closed;
        
        ii.)   $\bA^n$ -- affine space over $k$;
        
        iii.)  zero loci $Z(\{ f_1,\ldots,f_m \})  \subset \bA^n$;
        
        iv.)   Algebra $k[x_1,\ldots,x_n]/(f_1,\ldots,f_m)$ of algebraic
functions;

        v.)    Hilbert nullsellensatz:  $X \subset \bA^n$ versus
$k[x_1,\ldots,x_n] \to k[X]$.

        \vskip .1in
\noindent
I.B  Affine group schemes over $k$.

        i.)    Examples:  $\bG_a, \bG_m, GL_N, U_N$;
        
         ii.)   Product on $G$ gives coproduct on $k[G]$;
         
         iii.)  Group objects in the category of affine algebraic
varieties over $k$;

         iv.)   As representable functors from (comm k-alg) to (grps);
         
         v.)    Hopf algebras.

        \vskip .1in
\noindent
I.C  Characteristic $p > 0$.
         
         i.)    Examples of $k$ with $char(k) = p$;
        
         ii.)   Geometric Frobenius on affine varieties/$k$;
         
         iii.)  Lang map:  $1/F: G \to G$;
         
         iv.)   Frobenius kernels   $G_{(r)} = \ker\{ F^r: G \to G \}$;
        
         v.)    Example of $GL_{N(r)}$.

        \vskip .1in
\noindent
I.D  Lie algebra of $G$
         
         i.)    $Lie(G)$, tangent space at identity as derivations on $k[G]$.
        
         ii.)   Lie bracket $[-,-]$ and $p$-th power $(-)^{p]}$;
        
         iii.)  Examples of $\bG_a, \bG_m, GL_N$;
         
         iv.)   Relationship to $G_{(1)}$.

 \vskip .1in
\noindent
Supplementary topics:

\noindent
I.A  Extending consideration to $k$ not algebraically closed.
      Projective varieties.
\vskip  .1in
\noindent
I.B   Simple algebraic groups and their classification.
      Working with categories and functors.

\vskip  .1in
\noindent
I.C   Frobenius twists, $F: G \to G^{(1)}$.
      Arithmetic and absolute Frobenius maps.

\vskip  .1in
\noindent
I.D   Complex, simple Lie algebras.
      Root systems.
\vskip .2in

\subsection{Affine group schemes over $k$}

        We begin with a cursory introduction to affine algebraic geometry
over an algebraically closed field $k$.  For any $n > 0$, we denote by 
$\bA^n$ the set of $n$-tuples of of elements of $k$, by $\ul a \in \bA^n$
a typical $n$-tuple.  What distinguishes algebraic geometry from other
types of geometries is the role of algebraic functions on an algebraic
variety.  The ring of algebraic (i.e., polynomial) functions on $\bA^n$ is
by definition the $k$-algebra $k[x_1,\ldots,x_n]$ of polynomials in $n$ 
variables with coefficients in $k$, 
$p(\ul x) = \sum_{\ul d} c_{\ul d} \ul x^{\ul d}$, where $\ul d$ ranges 
over non-negative $n$-tuples $(d_1,\ldots,d_n) \in \bN^{\times n}$.  
We refer to $k[x_1,\ldots,x_n]$ as the {\it coordinate algebra} of 
$\bA^n$.  For any $\ul a \in \bA^n$, the value of $p(\ul x)$ on $\ul a$ is 
$\sum_{\ul d} c_{\ul d} \ul a^{\ul d}$. 

The {\bf Hilbert Nullstellensatz} tells us that $p(\ul x)$ is the 0 
polynomial (i.e., equals  $0 \in k[x_1,\ldots,x_n]$) if and only if 
$p(\ul a) = 0$ for all $\ul a \in \bA^n$.  This has a general formulation
which applies to any quotient $A = k[x_1,\ldots,x_n]/I$ with no nonzero
 nilpotent elements: $\ol p(\ul x)
\in A$ is 0 if and only if $p(\ul a) = 0$ for all $\ul a \in \bA^n$ which
satisfy $q(\ul a) = 0$ for all $q \in I$.

A closed subvariety of $\bA^n$ is the zero locus of a set $S$ of polynomials,
$Z(S) \subset \bA^n$  Let $<S>$ denote the ideal generated by $S$ and let
$I_S$ denote the radical ideal of all $g \in k[x_1,\ldots,x_n]$ for 
which some power of $g$ lies in $<S>$.  Then $Z(S) = Z(I_S)$ and 
$k[x_1,\ldots,x_n]/I_S$
is the ring of equivalence classes of polynomials $p(\ul x)$ for the 
equivalence relation $p(\ul x) \ \sim \  q(\ul x)$ if and only if 
$p(\ul x) - q(\ul x)$ vanishes on every $\ul a \in Z(S)$.   We say that
$A = k[x_1,\ldots,x_n]/I_S$ is the coordinate algebra of the affine algebraic
variety $\Spec A$ with underlying space $Z(S)$; the closed subsets
of $Z(S)$ are the subsets $Z(T)$ with $T \supset S$.  Thus, there
is a natural bijection between the closed subsets of $\bA^n$ (i.e., the
zero loci $Z(S) = Z(I_S)$) and their coordinate rings $k[x_1,\ldots,x_n]/I_S$
of algebraic functions.

More generally, the data of an affine $k$-scheme (of finite type over $k$) is a 
commutative, finitely generated $k$ algebra given non-uniquely as the quotient for
some $n > 0$ of $k[x_1,\ldots,x_n]$ by some ideal $J$.  An affine scheme
determines a functor from the category of commutative, finitely generated 
$k$-algebras to sets.  For $A = k[x_1,\ldots,x_n]/J$, this functor sends $R$ to 
$Hom_{k-alg}(A,R)$; in other words, $Hom_{k-alg}(A,R)$ equals the set of all 
$n$-tuples $\ul r \in R^n$ with the property that $p(\ul r) = 0$ for all 
$p(\ul x) \in J$.

Since the Hilbert Nullstellensatz does not apply to an affine $k$-scheme $A$
containing nilpotent elements,  we use the Yoneda Lemma to conclude the identification
of an affine scheme with its associated functor; thus, we may abstractly
define an affine scheme as a representable functor from finitely generated
commutative $k$-algebras to sets.

%%%%%%%%%%%%%%%%%%%

\subsection{Affine group schemes}

As made explicit in Definition \ref{defn:linearalg}, a linear algebraic group over
$k$ is, in particular, an algebraic variety over $k$.  
We introduced affine schemes in Subsection I.1 whose coordinate algebra might
have nilpotent elements in order to consider Frobenius kernels $G_{(r)}$ of 
linear algebraic groups (see Definition \ref{defn:Frobker}).  

\begin{defn}
\label{defn:linearalg}
A linear algebraic group $G$ over $k$ is an affine scheme over $k$ whose 
associated functor is a functor from finitely generated commutative
$k$-algebras to groups such that the coordinate algebra $k[G]$ of $G$ is an integral 
domain..
\end{defn}

For example, the linear algebraic group $GL_N$ is the affine variety of
$N \times N$ invertible matrices, with associated coordinate algebra
is $k[x_{1,1},\ldots,x_{n,n},z]/(det(x_{i,j}\cdot z-1)$.  As a functor,
$GL_N$ sends a commutative $k$-algebra $R$ to the group of $N\times N$
matrices with entries in $R$ whose determinant is invertible in $R$ (with
group structure given by multiplication of matrices).

We denote $GL_1$ by $\bG_m$, the multiplicative group.  The coordinate algebra
$k[\bG_m]$ of $\bG_m$ is the polynomial algebra 
$k[x,x^{-1}] \simeq k[x,y]/(xy-1)$.  The associated functor sends $R$ to 
the set of invertible elements $R^\times $ of $R$ with group
structure given by multiplication.

An even simpler, and for that reason more confusing, example is  
$\bG_a$, the additive group.  The coordinate algebra $k[\bG_a]$ of $\bG_a$
is $k[T]$.  The associated functor sends $R$ to itself, viewed as an 
abelian group (forgetting the multiplicative structure).

\begin{defn}
An affine group scheme over $k$ is an affine $k$-scheme whose 
associated functor is a functor from finitely generated commutative
$k$-algebras to groups.
\end{defn}

We shall use an alternate formulation of affine group schemes, in addition
to the formulation as a representable functor with values in groups.  
This formulation can be phrased geometrically as follows:  an affine 
group scheme is a group object in the category of schemes.

To be more precise, we state this formally.

\begin{defn}
An affine group scheme $G$ (over $k$) is the spectrum associated to a finitely generated,
commutative $k$-algebra $k[G]$ (the coordinate algebra of $G$) equipped
with a coproduct $\Delta_G: k[G] \to k[G] \otimes k[G]$ such that 
$(k[G],\Delta_G)$ is a Hopf algebra.
\end{defn}

This coproduct gives the functorial group structure on the $R$-points $G(R) \equiv Hom_{k-alg}(k[G],R)$
of $G$ for any finitely generated commutative $k$-algebra $R$:  namely, 
composition with $\Delta_G$ determines 
$$Hom_{k-alg}(k[G],R) \times Hom_{k-alg}(k[G],R) \simeq 
Hom_{k-alg}(k[G] \otimes k[G],R)$$
$$ \hskip 2in \to \ Hom_{k-alg}(k[G],R).$$
For example, the coproduct on the coordinate algebra of $GL_N$ is defined 
on the matrix function $X_{i,j} \in k[GL_N]$ by
$\Delta_{GL_N}(X_{i,j}) = \sum_\ell X_{i,\ell}\otimes X_{\ell,j}$.

\subsection{Characteristic $p > 0$}

In this subsection, we convey some of the 
idiosyncrasies of characteristic $p$ algebraic geometry.  We have already
mentioned one:  the Frobenius kernels $G_{(r)}$ of a linear algebraic
group $G$ are defined only if the ground field $k$ has positive characteristic.
Unlike the remainder of the text, in this subsection we allow $k$ to denote an arbitrary field
of characteristic $p$ (e.g., a finite field).

Let's begin by mentioning a few examples of fields of characteristic $p$,
where $p$ is a fixed prime number.  For any power $q = p^d$ of $p$, there
is a field (unique up to isomorphism) with exactly $q$ elements, denoted 
$\bF_q$.  For any set of ``variables" $S$ and any $k$, there is the
field (again, unique up to isomorphism) of all quotients $p(\ul s)/q(\ul s)$
of polynomials in the variables in $S$ and coefficients in $k$ such that $q(\ul s)$ is not
the 0 polynomial.  Typically, we consider a finite set $\{ x_1,\ldots,x_n \}$
of variables; in this case, we denote the field $k(x_1,\ldots,x_n)$.
If $I \subset k[x_1,\ldots,x_n]$ is a prime ideal,
then $k[x_1,\ldots,x_n]/I$ is an integral domain with field of fractions
$frac(k[x_1,\ldots,x_n]/I)$ of transcendence degree over $k$ equal to the
dimension of the affine algebraic variety associated to $k[x_1,\ldots,x_n]/I$. 

The key property of a field $k$ of characteristic $p$, and more generally of 
a commutative $k$-algebra $A$, is that $(a + b)^p = a^p + b^p$ for all $a,b \in A$.
The $p$-th power map $(-)^p: A \to A, \ a \mapsto a^p$ is thus a ring 
homomorphism.  However, if $a \in k$ does not lie in $\bF_p$ and 
if $b$ is such that $b^p \not= 0$,
then $(-)^p(a\cdot b) \ \not= \  a\cdot (-)^p(b)$ as would be required by 
$k$-linearity.

The (geometric) Frobenius map $F: k[x_1,\ldots,x_n] \to k[x_1,\ldots,x_n]$ is
a map of $k$-algebras (i.e., it is a $k$-linear ring homomorphism) defined
by sending an element $a \in k$ to itself, sending any $x_i$ to $x_i^p$.
Thus $F(\sum_{\ul d} c_{\ul d} \cdot \ul x^{\ul d}) = 
\sum_{\ul d} c_{\ul d} \cdot x^{p\cdot \ul d}$.  Viewed as a self-map of affine
space $\bA^n$, $F: \bA^n \to \bA^n$ sends the $n$-tuple $(a_1,\ldots,a_n)$
to the $n$-tuple $(a_1^p,\ldots,a_n^p)$ (in other words, the inverse image
of the maximal ideal $(x_1-a_1,\ldots,x_n-a_n)$ is the maximal ideal
$(x_1-a_1^p,\ldots,x_n-a_n^p)$).  

\begin{defn}
\label{defn:Atwist}
Let $A$ be a finitely generated commutative $k$-algebra and express
$A$ in terms of generators and relations by $k[x_1,\ldots,x_n]/(p_1,\ldots p_m)$.
For any $p(\ul x) = \sum_{\ul d}  c_{\ul d} \cdot \ul x^{\ul d} \in k[x_1,\ldots,x_n]$, set 
$\phi(p(\ul x)) \ = \ \sum_{\ul d} c_{\ul d}^p \cdot \ul x^{\ul d} \in k[x_1,\ldots,x_n]$; thus \\
$\phi: k[x_1,\ldots,x_n] \ \to \ k[x_1,\ldots,x_n]$
is an isomorphism of algebras which is semi-linear over $k$.

We define \ $A^{(1)} \ = \ k[x_1,\ldots,x_n]/(\phi(p_1),\ldots \phi(p_m)).$

We define the Frobenius map to be the $k$-linear map given by
$$F: A^{(1)} \ \to \ A, \quad \ol x_i \mapsto (\ol x_i)^p, $$
where $\ol x_i$ is the image of $x_i$ under either the projection $k[x_1,\ldots,x_n] \twoheadrightarrow A^{(1)}$
or the projection $k[x_1,\ldots,x_n] \twoheadrightarrow A$.
Hence, if the ideal $(p_1,\ldots p_m)$ is generated by elements in $\bF_p[x_1,\ldots,x_n]$ (i.e., 
if $A$ is defined over $\bF_p$), then the Frobenius map is an endomorphism $F: A \to A$.
\end{defn}

To verify that $F: A^{(1)} \ \to \ A$ is well defined, we observe that $F(\phi(p(\ul x)) = (p(\ul x))^p$
for any $p(\ul x) \in k[x_1,\ldots,x_n]$, so that $F((\phi(p_1),\ldots, \phi(p_m))) \subset (p_1,\ldots p_m)$.

An intrinsic way to define $A^{(1)}$ is given in \cite{FS}, which gives a quick way to show that 
the definition of $A^{(1)}$ does not depend upon generators and relations.  Namely, $A^{(1)}$
is isomorphic as a $k$-algebra to the base change of $A$ via the map $\phi: k \to k$ sending 
$a \in k$ to $a^p$.

One of the author's favorite constructions is the following construction of Serge Lang (\cite{L})
using the Frobenius.  Namely, if $G$ is an affine group scheme over $k$ which is defined over $\bF_p$,
then we have a morphism of affine $k$-schemes (but not of affine group schemes)
$$1/F: G  \stackrel{id \times F}{\to} G\times G 
\stackrel{id \times inv}{\to} G\times G \stackrel{\mu}{\to} G.$$
If $G$ is a simple algebraic group over $k$, then $G$ is defined over $\bF_p$ and $1/F$ is a covering
space map of $G$ over itself (i.e., $1/F$ is finite, etale), a 
phenomenon which is not possible for Lie groups or linear algebraic groups over
a field of characteristic $0$.

We conclude this subsection with the example most relevant for
our purposes, namely the example of Frobenius kernels.

\begin{defn}
\label{defn:Frobker}
Let $G$ be a linear algebraic group over $k$.  Then for any positive integer
$r$, we define the $r$-th Frobenius kernel of $G$ to be the affine group
scheme defined as the kernel of the $r$-th iterate of Frobenius, $F^r: G \to G^{(r)}$.
The functor associated to $G_{(r)}$ sends a finitely generated commutative
$k$-algebra $R$ to the kernel of the $r$-th iterate of the Frobenius,
$F^r: G(R) \to G^{(r)}(R)$.
\end{defn}

So defined, the coordinate algebra $k[G_{(r)}]$ of $G_{(r)}$ is the
quotient of $k[G]$ by the $p^r$-th power
of the maximal ideal of the identity of $G$.  (This quotient is well defined
for any field, but is a Hopf algebra if and only if $k$ is of characteristic $p$.)

A good example is the $r$-th Frobenius kernel of $GL_N$.  We identify
the functor $R \mapsto  GL_{N(r)}(R)$ 
as sending $R$ to the group (under multiplication) of $N\times N$ matrices with
coefficients in $R$ whose $p^r$-th power is the identity matrix.  In
characteristic $p$, if $A, B$ are two such $N\times N$ matrices, then
$(A\cdot B)^{p^r} = (A^{p^r})\cdot (B^{p^r})$, so that $GL_{N(r)}(R)$ is
indeed a group.

\subsection{Restricted Lie algebras}

In his revolutionary work on continuous actions (of Lie groups on real vector
spaces), Sophus Lie showed that the continuous action of a Lie group is
faithfully reflected by its ``linearization", the associated action of
its Lie algebra.   We may view the Lie algebra of a Lie group as the 
tangent space at the identity equipped with a Lie bracket on pairs of tangent
vectors which is a ``first
order infinitesimal approximation" of the commutator of pairs of elements of the
group.  Exponentiation sends a Lie algebra map to a neighborhood of
the identity in the Lie group.

This property of the Lie algebra to faithfully reflect the action of the Lie group
fails completely in our context of representation theory of affine
group schemes over a field of characteristic $p$.  Instead, one should
consider all ``infinitesimal neighborhoods" $G_{(r)}$ of the identity of $G$.
Nevertheless, the Lie algebra $\fg$ of $G$ and its representations play 
a central role in our considerations.

\begin{defn}
\label{def:liealgebra}
A Lie algebra $\fg$ over $k$ is a vector space equipped with a binary operation 
$[-,-]: \fg \otimes \fg \to \fg$ satisfying $[x,x] = 0$ for
all $x\in \fg$ and the Jacobi identity
$$[x,[y,z]] + [z,[x,y]] + [y,[z,x]] \ = \ 0, \quad \forall x,y,z \in \fg.$$

A Lie algebra is $p$-restricted if it has an additional ``$p$-operation" 
$[-]^{[p]}: \fg \to \fg$ which satisfies conditions (see \cite{Jac}) satisfied
by the $p$-th power of matrices in $\gl_N = Lie(GL_N)$ and by the $p$-th power 
of derivations of algebras (over a field of characteristic $p$).
\end{defn}

Any finite dimensional restricted Lie algebra $\fg$ admits an embedding as
a Lie algebra into some $\gl_N$ such that the $p$-operation of $\fg$ is the
restriction of the $p$-th power in $\gl_N$.  The subtlety here is that a 
Lie algebra is not equipped with an associative multiplication (except, 
accidentally, for $\gl_N$).   If $\fg \subset \gl_N$ is an embedding of $p$-restricted
Lie algebras, then the $p$-th power in $\gl_N$ of an element $X \in \fg$ is again
in $\fg$ and equals $X^{[p]} \in \fg$.

Given an affine group scheme $G$ over $k$, the Lie algebra $\fg$ of $G$ can
be defined as the space of $G$-invariant derivations of $k[G]$, a Lie subalgebra
of the Lie algebra of all $k$-derivations of $k[G]$.  Alternatively, $\fg$ can be
identified with the vector space of $k$ derivations $X: k[G] \to k[G]$ based at the 
identity $e \in G$; in other words, elements of $\fg$ can be viewed as 
$k$-linear functionals on $k[G]$ satisfying
$X(f\cdot h) = f(e)X(h) + h(e)X(f)$, with bracket $[X,Y]$  defined
to be the commutator $X\circ Y - Y \circ X$.  Because we are working over
a field of characteristic $p$, the $p$-fold composition of such a derivation
$X$ with itself is again a derivation based at $e$; sending $X$ to this
$p$-fold composition, $X \mapsto X\circ \cdots \circ X$, equips $\fg$
with a $p$-operation.

In other words, $Lie(G)$ is a $p$-restricted Lie algebra.  For example,
for $G = \bG_a$, the associated $p$-restricted Lie algebra $\fg_a$ is
the 1-dimensional vector space $k$ (whose bracket necessarily is 0)
and the $p$-operation sends any 
$c \in k$ to 0.  For $G = \bG_m$, the associated Lie algebra is again
the 1-dimensional vector space with trivial bracket, but the $p$-operation sends $a \in k$ to
$a^p$.

As a lead-in to Lecture II, we recall the definition of a $p$-restricted 
representation of a restricted Lie algebra $\fg$.  The ``differential" of
a representation of a group scheme over $k$ is a $p$-restricted representation
of $\fg = Lie(G)$

\begin{defn}
\label{defn:restricted}
Let $\fg$ be a restricted Lie algebra over $k$.  A $p$-restricted representation
of $\fg$ is a $k$-vector space $V$ together with a $k$-bilinear pairing
$$\fg \otimes V \ \to \ V, \quad (X,v) \mapsto X(v)$$
such that $[X,Y](v) \ = \ X(Y(v)) - Y(X(v))$ and $X^{[p]}(v)$ equals the 
result of iterating the action of $X$ $p$-times, $X(X(\cdots X(v))\cdots)$.  

Let $U(\fg)$ denote the universal enveloping algebra of $\fg$, defined
as the quotient of the tensor algebra $T^*(\fg) = \oplus_{n \geq 0} \fg^{\otimes n}$
by the ideal generated by the relations $X\otimes Y - Y\otimes X - [X,Y]$
for all pairs $X,Y \in \fg$.  Then the restricted enveloping algebra of
$\fg$, denoted here as in \cite{J} by $U^{[p]}(\fg)$, is the quotient of 
$U(\fg)$ by the ideal generated by the relations $X^{\otimes p} - X^{[p]}$
for all $x \in \fg$.  If $\fg$ has dimension $n$ over $k$, then $U^{[p]}(\fg)$ 
is a finite dimensional $k$-algebra of dimension $n^p$.  

A structure of a $p$-restricted representation of $\fg$ on a $k$-vector space
$V$ is naturally equivalent to a $U^{[p]}(\fg)$-module 
structure on $V$.
\end{defn}

A good example is the ``adjoint representation" of a restricted Lie 
algebra $\fg$.  Namely, we define $\fg \otimes \fg \to \fg$ sending 
$(X,Y)$ to $X(Y) \equiv [X,Y]$.  The Jacobi identity of $\fg$ implies the 
condition that $[X_1,X_2](Y) = X_1(X_2(Y)) - X_2(X_1(Y))$ and the axioms of 
a $p$-operation imply that $X^{[p]}(Y) = [X,[X,\ldots [X,Y]\ldots]$.

\vskip 1in

%%%%%%%%%%%%%%%%%%%%%%%%%%%%%%%%%%%%%%%%%%%
%%%%%%%%%%%%%%%%%%%%%%%%%%%%%%%%%%%%%%%%%%%

\section{Lecture II: Algebraic representations}

Following Lecture I which discussed finite groups, restricted Lie algebras, 
Frobenius kernels, and algebraic groups (all of which we would include
under the rubric of ``group-like structures"), this lecture discusses what
are the {\bf algebraic} representations of these objects.   Our basic reference
for this lecture is the excellent book ``Representations of Algebraic Groups"
by J. Jantzen \cite{J}.

	Here is the outline provided to participants attending this second
lecture.

\vskip .1in
\noindent
II.A    Equivalent formulations of rational $G$-modules.
       
        i.) For $M$ finite dimensional, matrix coefficients;
        
        ii.)  Functorial actions;
        
        iii.) Comodules for coalgebra;
        
        iv.) Locally finite modules for hyperalgebra.

\vskip .1in
\noindent
II.B    Examples.
       
        i.)  $\bG_a$-modules, $\bG_m$-modules;
       
        ii.)  Modules arising from (strict polynomial) functors;
        
        iii.) Induced modules;
        
        iv.)  Abelian category.

\vskip .1in
\noindent
II.C   Weights arising from action of a torus;
       
        i.)  Borel's theorem about stable vector for $B$ solvable;
       
        ii.)  Highest weight of an irreducible;
       
        iii.)  $H^0(\lambda)$ and Weyl character formula;
       
        iv.)  Lusztig's Conjecture.

\vskip .1in
\noindent
II.D    Representations of Frobenius kernels.
        
        i.)  General theory of representations of Artin algebras
             (e.g., Wederburn theorem; injective = projective);
        
        ii.)  Special case of $G_{a(r)}$;
        
        iii.)  $G$-modules and $\{ G_{(r)} \}$-modules.

\vskip .1in
\noindent
Topics for discussion/project:
\vskip .1in
\noindent
II.A  Working out diagrams for checking properties of coaction.
      Examples of $GL_N$-actions which are not algebraic
      Investigating the action of the Lie algebra on $M$ associated to
         a rational action of $G$ on $M$.

\vskip .1in
\noindent
II.B  Working out properties for the categories of finite dimensional and
         all rational $G$-modules.
      Frobenius reciprocity.

\vskip .1in
\noindent
II.C  Discussion of roots for a simple algebraic group.  
      Understanding of Weyl's character formula (for complex repns).

\vskip .1in
\noindent
II.D  Expanded investigation of Artin algebras.
      Discussion of representations of $kE$, $E$ elementary abelian.
      Lie algebra actions.
      
\vskip .2in

\subsection{Algebraic actions}
      
      We are interested in the {\bf algebraic} actions of the ``group-like"  
 structures $G$ discussed
 in the previous lecture on vector spaces $V$ over our chosen field $k$ 
 (which we take to be algebraically closed of characteristic $p$ for some
 prime $p$).   A group action of $G$ on $V$ is a pairing
 $$\mu: G \times V \to V, \quad (g,v) \mapsto \mu(g,v)$$
 whose ``adjoint" is the corresponding group homomorphism $\rho_\mu: G \to Aut_k(V)$.
 For simplicity, we first assume that $V$ is of some finite dimension $N$, 
 so that $\rho_\mu$ takes the form
 $$\rho_\mu: G \  \to \ Aut_k(V) \simeq  \ GL_N.$$
 A discrete action is one for which no further requirement on 
 $\rho_\mu$ is imposed other than it be group homomorphism
 (on the $k$-points of $G$ and $GL_N$, thus of the form $G(k) \to GL_N(k)$).  
 A continuous action has the additional condition that
 composition with each matrix function 
 $$X_{i,j} \circ \rho_\mu: G(k) \ \simeq  \ GL_N(k) \ \to k, \quad 1 \leq i,j \leq N$$
is continuous;  for this to be meaningful, $k$ must have a topology (e..g., for the fields
 $\bR, \ \bC$ which are of course of characteristic 0).
 
 Recall that the data of an affine scheme $X$ (e.g., an affine group scheme)  is equivalent
 to that of its coordinate algebra $k[X]$, often called its algebra of ``regular functions".
 We view elements of $k[X]$ as ``algebraic functions" from $X$ to $k$; more formally,
 an algebraic function is a function functorial with respect to maps of finitely
 generated $k$-algebras.  In other words,  $f \in k[X]$ is equivalent to the following data: for
 any finitely generated commutative $k$-algebra $A$, a map of sets 
$Hom_{k-alg}(k[X],A) \to A$ (i.e., a function from the $A$ points of $X$ to $A$)
which is functorial with respect to $A$.    (Observe that
$f \in k[X]$ is recovered from this data as the image of the identity
$Hom_{k-alg}(k[X],k[X]) \to k[X]$;  for any $f$ and any $A$, we send
$\phi \in Hom_{k-alg}(k[X],A)$ to the $\phi(f) \in A$.)
 
 Before we formulate the definition of an algebraic action of a general affine
 algebraic group $G$ on a $k$-vector space, we first consider algebraic actions of 
 a linear algebraic group.  The definition below implicitly uses  the Hilbert Nullstellensatz.

 \begin{defn}
 \label{defn:algebraic}
 Let $G$ be a linear algebraic group over $k$ and $V$ a finite dimensional
 $k$-vector space of dimension $N$.  Then an action  $\mu: G\times V \to V$
 of $G$ on $V$ is defined to be algebraic (usually called ``rational") if each matrix coefficient of $\mu$, 
 $$X_{i,j} \circ \rho_\mu: G(k) \ \to  \ GL_N(k) \ \to k, \quad 1 \leq i,j \leq N,$$ 
is an element of $k[G]$. 
 \end{defn}

 \begin{ex}
 \label{ex:defining}
 We give a first example of an algebraic action.  Further examples will easily
 follow from alternative formulations in Proposition \ref{prop:alternate} of the 
 algebraicity condition of Definition \ref{defn:algebraic}.
 
 Let $G = GL_n$ and let $V$ be the elements of degree $d$ in the polynomial algebra $k[x_1,\ldots,x_n]$.
 We define the group action 
 $$\mu: GL_n(k) \times V \to V, \quad g \cdot  (x_1^{d_1}\cdots x_n^{d_n}) \ = 
 \ (g\cdot x_1)^{d_1} \cdots (g\cdot x_n)^{d_n},$$
 where $\sum_i d_i = d$ and where $g\cdot x_j = \sum_j X_{i,j}(g)x_i$.
 Thus, $V$ is the $d$-fold symmetric power $S^d(k^n)$ of the ``defining representation" of $GL_n$ on 
 $k^n$.
 
 It is a good (elementary) exercise to verify that each matrix coefficient of $\mu$ is an element of $k[GL_n]$.
 
 We can argue similarly for exterior powers $\Lambda^d(k^n)$. For example,  $\Lambda^n(k^n)$ is
 a 1-dimensional representation of $GL_n$ given by
 $$\mu: GL_n(k) \times k \ \to \ k, \quad g\cdot v \ = \ det(g) v;$$
 the algebraicity condition is simply that $\rho_\mu: GL_n(k) \to GL_1(k) = k^\times \subset k$
 is an element of $k[GL_n]$.
 Observe that this representation is ``invertible", in the sense that
 $\mu^{-1}: GL_n(k) \times k \ \to \ k, \quad g\cdot v \ = \ det(g)^{-1} v$
 is also algebraic.
  \end{ex}
 
We extend the definition of an algebraic action to encompass an affine group scheme over $k$ acting
on an arbitrary $k$-vector space. 
  
  \begin{prop}
  \label{prop:alternate}
  Let $G$ be an affine group scheme over $k$ and $V$ a $k$-vector space.  Then the following
  two conditions on a group action $\mu: G(k) \times V \to V$ are equivalent.
  \begin{enumerate}
  \item
  There exists a $k$-linear map $\Delta_V: V \to V\otimes k[G]$ which provides $V$ with the structure of a 
  $k[G]$-comodule; the pairing $\mu: G(k) \times V \to V$ is given by sending $(g,v)$ to 
  $((1 \otimes ev_g) \circ \Delta_V)(v)$.
  \item
  There exists  a pairing of functors $\ul \mu: G(-) \times  ((-)\otimes V) \to (-)\otimes V$ on commutative $k$-algebras
  such that  $G(A) \times (A \otimes V) \to A\otimes V$ is an $A$ linear action of $G(A)$ for any commutative
   $k$-algebra $A$; the pairing $\mu: G(k) \times V \to V$ is given by taking $A = k$.
  \end{enumerate}
  Moreover, if is $V$ is finite dimensional, say of dimension $N$, then the first two conditions are equivalent
  to the condition:

\quad (3) The adjoint of $\mu$ is a map $G \to GL_N$ of group schemes over $k$.

Furthermore, if $G$ is a linear algebraic group and $V$ has dimension $N$, then these equivalent 
conditions are equivalent to the algebraicity condition of Definition \ref{defn:algebraic}.
  
  We define an algebraic action of $G$ on an arbitrary vector space $V$ over $k$ to be one 
  that satisfies the equivalent conditions (1) and (2).
  \end{prop}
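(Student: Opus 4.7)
The plan is to establish the cycle $(1)\Leftrightarrow(2)$ in full generality, then $(2)\Leftrightarrow(3)$ in the finite-dimensional setting, and finally to identify condition $(3)$ with the matrix-coefficient algebraicity of Definition \ref{defn:algebraic} when $G$ is a linear algebraic group. The unifying principle will be the Yoneda lemma for the functor of points $G = \Hom_{k\text{-alg}}(k[G],-)$: natural transformations out of this representable functor are determined by the image of the universal element $\id \in G(k[G])$ corresponding to $\id_{k[G]}$.

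For $(1) \Rightarrow (2)$, I would take a coaction $\Delta_V \colon V \to V \otimes k[G]$ and, for each commutative $k$-algebra $A$ and each $g \in G(A) = \Hom_{k\text{-alg}}(k[G], A)$, define the $A$-linear action on $A \otimes V$ by extending $v \mapsto (\id_V \otimes g)(\Delta_V(v)) \in V \otimes A \cong A \otimes V$ $A$-linearly and multiplying by the $A$-component. The group-action axioms should then be formal consequences of the coassociativity and counit diagrams for $\Delta_V$ together with the multiplication and unit on $k[G]$. For $(2) \Rightarrow (1)$, I would specialize the functorial action to $A = k[G]$ evaluated at $\id \in G(k[G])$, obtaining a $k[G]$-linear endomorphism of $k[G] \otimes V$ whose restriction to $1 \otimes V \to k[G] \otimes V$ yields, after the tensor-factor swap, the desired coaction $\Delta_V$; the Yoneda lemma will then guarantee that the two constructions are mutually inverse.

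For $(2) \Leftrightarrow (3)$ in the finite-dimensional case, the key observation is that an $A$-linear automorphism of $A \otimes V \cong A^N$ is by definition an element of $GL_N(A)$, so a functorial family of group actions $G(A) \times (A \otimes V) \to A \otimes V$ is exactly a natural transformation of group-valued functors $G \to GL_N$, which by Yoneda is precisely a morphism of affine group schemes over $k$. For the equivalence with Definition \ref{defn:algebraic}, a morphism $G \to GL_N$ of affine group schemes dualizes to a Hopf algebra map $k[GL_N] \to k[G]$; the image of the matrix coordinate $X_{i,j} \in k[GL_N]$ is precisely the function $X_{i,j} \circ \rho_\mu$ appearing in Definition \ref{defn:algebraic}. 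Since $G$ is a linear algebraic group, $k[G]$ is an integral domain, so the Hilbert Nullstellensatz ensures that a function on $G(k)$ given by the algebra-valued matrix entries lifts uniquely to an element of $k[G]$, completing the equivalence.

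The main obstacle I anticipate is the careful bookkeeping that identifies the coassociativity diagram for $(V, \Delta_V)$ relative to the coproduct $(k[G], \Delta_G)$ with the associativity of the functorial group action, keeping track of tensor-factor orderings and the left/right coaction conventions. Once the correct commutative pentagon is written down, the verifications become formal consequences of the Hopf algebra axioms on $k[G]$ combined with functoriality in $A$, and no element-level computations are required.
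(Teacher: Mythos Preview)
The paper does not supply a proof of this proposition; it is stated as a standard fact and the text moves directly to a remark and examples. Your proposal is the usual argument and is correct: the Yoneda correspondence between coactions and functorial actions handles $(1)\Leftrightarrow(2)$, and the identification of $A$-linear automorphisms of $A\otimes V$ with $GL_N(A)$ gives $(2)\Leftrightarrow(3)$ in the finite-dimensional case. For the final equivalence with Definition~\ref{defn:algebraic}, note one detail you should make explicit: from the matrix-coefficient condition you obtain a $k$-algebra map $k[\{X_{i,j}\}]\to k[G]$, but to extend it over $k[GL_N]$ you must check that the image of $\det$ is a unit in $k[G]$, and to conclude it is a Hopf-algebra map you must verify compatibility with the coproducts. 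Both follow from the Nullstellensatz exactly as you indicate (using that $k[G]$ and $k[G]\otimes k[G]$ are reduced since $k$ is algebraically closed and $k[G]$ is a domain), so your sketch goes through once these points are recorded.
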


  \begin{remark}
  We say that a group scheme $G$ over $k$ is a finite group scheme over $k$ if $k[G]$ is 
  finite dimensional (over $k$).  For any finite group scheme $G$ over $k$ and any $k$-vector
  space $V$, there is a natural bijection between comodule structures $\Delta_V: V \to V\otimes k[G]$
  and module structures $(k[G])^\# \otimes V \to V$.  Namely, we associate to $\Delta_V$ the
  pairing 
  $$(k[G])^\# \otimes V \stackrel{1 \otimes \Delta_V}{\to} (k[G])^\# \otimes V \otimes k[G] \to V,$$
  where the second map is given by the evident evaluation $(k[G])^\# \otimes k[G] \to k$.
   \end{remark}
   
\begin{note}
If $G$ is a finite group scheme over $k$, we denote by $kG$ the algebra $(k[G])^\#$ and refer
to $kG$ as the group algebra of $G$.  In \cite{J}, $kG$ is called the distribution algebra of $G$
(of $k$-distributions at the identity) whenever $G$ is an infinitesimal group scheme (i.e., 
whenever $G$ is a connected, finite group scheme).

If $G$ is a linear algebraic group over $k$ we denote by $kG$ the colimit $\varinjlim_r kG_{(r)}$
and refer to this algebra as the group algebra of $G$; once again, this is called the distribution
algebra of $G$ by Jantzen in \cite{J}; it also is called the hyperalgebra of $G$ by many authors
(e.g., \cite{Sull}).
\end{note}
 \vskip .2in
  
\subsection{Examples}

  Now, for some more examples.
  
\begin{ex}
\label{ex:groupalg}
\begin{enumerate}
\item  Take $G$ to be any affine group scheme.  Then the coproduct $\Delta_G: k[G] \ \to \ k[G] \otimes k[G]$ 
determines the right regular action $\mu: G \times G \to G$ (where the first factor of $G \times G$
is the object acted upon and the second factor is the group acting).

\item
Take $G = \bG_{a(r)}$ for some $r > 0$.  Then $k[\bG_{a(r)}]$ equals $k[T]/T^{p^r}$ with linear dual
 $kG_{a(r)} \ = \ k[u_0,\ldots,u_{r-1}]/(\{ u_i^p\})$; we identify $u_i$ as the $k$-linear map sending
 $T^n$ to 0 if $n \not= p^i$ and sending $T^{p^i}$ to 1.
Since $k[u_0,\ldots,u_{r-1}]/(\{ u_i^p\})$ can be identified with the group algebra of the elementary abelian $p$-group 
$(\bZ/p\bZ)^{\times r}$, 
we conclude a equivalence of categories between the category of $\bG_{a(r)}$-representations and the category of representations of 
$(\bZ/p\bZ)^{\times r}$ on $k$-vector spaces.

\item
Take $G = \bG_a$, with $k[\bG_a] = k[T]$ and consider
$$k\bG_a \ \equiv \ \varinjlim_r (k[G_{a(r)}])^\#) \ = \ k[u_0,\ldots,u_n, \ldots]/(\{ u_i^p, i \geq 0\}).$$
Then an algebraic action of $\bG_a$ on $V$ is equivalent to the data of infinitely many $p$-nilpotent operators
$u_i: V \to V$ which pair-wise commute such that for any $v \in V$ there 
exist only finitely many $u_i$'s with $u_i(v) \not= 0$.

 \item
Take $G = \bG_m$, with coordinate algebra $k[\mathbb G_m] \simeq k[T,T^{-1}]$. 
A $k[\bG_m]$-comodule structure on $V$ has the form
$$\Delta_V: V \to V\otimes k[\bG_m], \quad v \mapsto \sum_{n \in \bZ} p_n(v) \otimes T^n.$$
where each $p_n: V \to V$ is a $k$-endomorphism of $V$.   One checks that $\sum_n p_n = id_V$, $p_m \circ p_n = \delta_{m,n} p_n$
which implies that $V \ = \ \bigoplus_{n\in \bZ} V_n$ where $V_n = \{ v \in V: \Delta_V(v)  = v \otimes T^n \}$.
For $v \in  V_n$, $a \in \bG_m(k) = k^\times$ acts by sending $v$ to $T^n(a)\cdot v = a^n \cdot v$.  In particular,
$V_n$ is a direct sum of 1-dimensional irreducible $\bG_m$-modules whose isomorphism class is characterized
by $n \in \bZ$, the power through which $k^\times$ acts.  

It is useful to view the action of $\bG_m$ on some 1-dimensional irreducible $\bG_m$-module as the composition of a homomorphism $\lambda: \bG_m \to \bG_m$ with the defining action of $\bG_m$ on $k$.  Such a homomorphism
(or character) is given by a choice of $n \in \bZ$ (corresponding to the map on coordinate algebras $k[T,T^{-1}] \to
k[T,T^{-1}]$ sending $T$ to $T^n$).  See Definition \ref{defn:character} below.

\item
Take $G = GL_n$ and fix some $d > 0$.  Consider $\rho: GL_n \to GL_N$ (corresponding to 
an action of $GL_n$ on a vector space of dimension $N$) with the property that 
$X_{i,j} \circ \rho: GL_n \to k$
extends to a function $GL_n \subset \bA^{n^2} \to k$ which is a homogeneous polynomial of degree
$d$ in the $n^2$ variables of $\bA^{n^2}$ for some $d > 0$ independent of $(i,j)$.   
Such an action is said  to be a polynomial representation 
homogeneous of degree $d$ of $GL_n$ (of rank $N$).  This generalizes the examples of 
Example \ref{ex:defining}.
\end{enumerate}
\end{ex}
  
We next recall the definition of the character group $X(G)$ of $G$, extending the discussion of
Example \ref{ex:groupalg}.4.  For our purposes, the diagonalizable affine group schemes of most 
interest are (split) tori $T$ (isomorphic to some product of $\bG_m$'s) and their Frobenius kernels.
 
 \begin{defn}
 \label{defn:character}
Let $G$ be an affine group scheme over $k$.  A character of $G$ is a homomorphism
of group schemes over $k$, $\lambda: G \to \bG_m$.  Using the abelian group structure of $\bG_m$, 
the set of characters of $G$ inherits an abelian group structure which is denoted by $X(G)$.

An affine group scheme $G$ is said to be diagonalizable if its coordinate algebra $k[G]$ 
is isomorphic as a Hopf algebra to the group algebra $k\Lambda$, where $\Lambda = X(G)$
is the character group of $G$.  (Here, the coproduct on $k\Lambda$ is given by $\lambda \mapsto
\lambda \otimes \lambda$.)
 \end{defn}
 
 For example, $\bG_m$ is a diagonalizable group scheme over $k$ with coordinate
 algebra $k[\bG_m] \simeq k\bZ$.
 
 \begin{prop}
 \label{diagonalizable}
 Let $G$ be a diagonalizable group scheme with character group $\Lambda$.  Then
 an algebraic representation of $G$ on a $k$-vector space $V$ has a natural 
 decomposition as a direct sum, $V \ \simeq \ \bigoplus_{\lambda \in \Lambda} V_\lambda$,
 where $V_\lambda = \{ v \in V: g\cdot v = \lambda(g)\cdot v, \forall g \in G \}$.
 \end{prop}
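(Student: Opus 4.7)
\medskip

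The plan is to exploit the equivalence between algebraic $G$-actions on $V$ and $k[G]$-comodule structures $\Delta_V : V \to V \otimes k[G]$ (Proposition \ref{prop:alternate}(1)), together with the explicit description of $k[G]$ as the group algebra $k\Lambda$ with comultiplication $\Delta_G(\lambda) = \lambda \otimes \lambda$. Since $\Lambda$ is a $k$-basis of $k[G]$, we may write uniquely
\[
\Delta_V(v) \ = \ \sum_{\lambda \in \Lambda} p_\lambda(v) \otimes \lambda
\]
where each $p_\lambda : V \to V$ is a $k$-linear endomorphism and, for each fixed $v$, only finitely many $p_\lambda(v)$ are nonzero. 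The proposed weight space decomposition will follow once we show that the family $\{p_\lambda\}_{\lambda \in \Lambda}$ is a complete orthogonal system of idempotents whose images are the $V_\lambda$.

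The key step is to extract the idempotent relations from the two comodule axioms. The counit axiom $(\mathrm{id}_V \otimes \epsilon) \circ \Delta_V = \mathrm{id}_V$, combined with the fact that $\epsilon(\lambda) = 1$ for every group-like $\lambda \in \Lambda$, forces $\sum_\lambda p_\lambda = \mathrm{id}_V$ (a finite sum on each $v$). The coassociativity axiom $(\Delta_V \otimes \mathrm{id}) \circ \Delta_V = (\mathrm{id}_V \otimes \Delta_G) \circ \Delta_V$, combined with $\Delta_G(\lambda) = \lambda \otimes \lambda$, expands as
\[
\sum_{\mu,\lambda} p_\mu(p_\lambda(v)) \otimes \mu \otimes \lambda \ = \ \sum_\lambda p_\lambda(v) \otimes \lambda \otimes \lambda,
\]
and comparing coefficients of the $k$-linearly independent vectors $\mu \otimes \lambda$ on both sides yields $p_\mu \circ p_\lambda = \delta_{\mu,\lambda}\, p_\lambda$. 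Setting $V_\lambda := \mathrm{Im}(p_\lambda)$, the orthogonal idempotent relations immediately give the direct sum decomposition $V = \bigoplus_{\lambda \in \Lambda} V_\lambda$, with $V_\lambda$ characterized by $p_\lambda(v) = v$, equivalently $\Delta_V(v) = v \otimes \lambda$.

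It remains to identify $V_\lambda$ with $\{v \in V : g \cdot v = \lambda(g) \cdot v\}$. For $v \in V_\lambda$ and $g \in G(k)$, the action recipe of Proposition \ref{prop:alternate}(1) gives $g \cdot v = (\mathrm{id}_V \otimes \mathrm{ev}_g)(v \otimes \lambda) = \lambda(g)\, v$. Conversely, for the inclusion $\{v : g \cdot v = \lambda(g) v\} \subseteq V_\lambda$, one must use the functorial action in the sense of Proposition \ref{prop:alternate}(2): applying the condition to the universal point $\mathrm{id}_{k[G]} \in G(k[G])$ (equivalently, invoking Yoneda together with the linear independence of $\{\mu\}_{\mu \in \Lambda} \subset k[G]$) forces the expansion $\Delta_V(v) = v \otimes \lambda$. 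The small subtlety worth highlighting --- and the one place where the proof is not purely mechanical --- is precisely this passage from the pointwise condition to the comodule identity, since for non-reduced $G$ the condition on $k$-points alone is insufficient and one genuinely needs the functorial formulation. Naturality of the decomposition in $V$ is then automatic from the construction of the $p_\lambda$ from $\Delta_V$.
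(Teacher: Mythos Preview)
Your argument is correct and essentially identical to the one the paper sketches for the special case $G = \bG_m$ in Example \ref{ex:groupalg}(4); the paper itself gives no separate proof of Proposition \ref{diagonalizable}, so your write-up simply carries out in general the idempotent computation $\sum_\lambda p_\lambda = \id_V$, $p_\mu \circ p_\lambda = \delta_{\mu,\lambda}\,p_\lambda$ that the paper indicates there. Your added remark about needing the functorial action (or the universal point) to get the reverse inclusion when $G$ is non-reduced is a genuine refinement over what the paper says.
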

 
 One important construction which produces algebraic representations is ``induction
 to $G$ from a closed subgroup $H \subset G$".  This is sometimes called 
 ``co-induction" by ring theorists.
 
 \begin{defn}
 \label{defn:induced}
 Let $G$ be an affine group scheme and $H \subset G$ a closed subgroup scheme
 (i.e., the coordinate algebra of $H$ is the quotient of $k[G]$ by a Hopf ideal).
 Let $H \times W \to W$ be an algebraic representation of $H$.  Then the 
 induced representation $ind_H^G(W)$ has underlying vector space given by
 $(k[G]\otimes W)^H$, the elements of $k[G]\otimes W$ fixed under the 
 diagonal action of $H$ acting on $k[G]$ through the right regular representation
 and on $W$ as given; the $G$ action $G \times (k[G]\otimes W)^H$ is given
 by the left regular representation on $G$.
 \end{defn}
 
 \vskip .2in
 
 \subsection{Weights for  $G$-modules}
 
 If $G$ is a linear algebraic group over $k$, then a Borel subgroup of $G$ is a maximal solvable,
 closed, connected algebraic subgroup.  With our standing hypothesis that $k$ is algebraically
 closed, all such Borel subgroups $B \subset G$ are conjugate in $G$.  Any maximal torus $T$ of $G$
 (i.e., a product of $\bG_m$'s of maximal rank) is contained in some Borel $B \subset G$ and
 maps isomorphically onto the quotient of $B$ by its unipotent radical $U$; thus $B \  \simeq \ U \rtimes T$. 
 
 \begin{defn}
 Let $G$ be a linear algebraic group, $T \subset G$ a maximal torus, $\ell$ the rank of $T$ (so
 that $T \simeq \bG_m^{\times \ell})$.  Let $V$ be a $G$-module (i.e., 
 an algebraic representation of $G$ on the $k$-vector space $V$).  Then the set of 
 weights of $V$ are those characters $\lambda \in X(T)$ with the property that the
 decomposition of $V$ as a $T$-module has non-zero $\lambda$-eigenspace (i.e., $V_\lambda \not= 0$).
 \end{defn}
 
 If $G$ is unipotent (for example, the algebraic subgroup of $GL_N$ of upper triangular
 matrices with 1's on the diagonal), then its maximal torus is simply the identity group.
 However, for $G$ simple (or, more generally for $G$ reductive), this concept of the
 weights of a representation is the key to parametrizing the irreducible representations of $G$
 as stated in Proposition \ref{prop:irred}.
 
\begin{defn}
\label{defn:irred-ind}
Let $G$ be an affine group scheme over $k$.  A non-zero
$G$-module $V$ (given by an algebraic action $\mu: G \times V \to V$)
is said to be {\bf irreducible} if $V$ contains no non-trivial $G$ submodule;
in other words, the only $k[G]$-comodules contained in
$V$ are 0 and $V$ itself.

A  non-zero $G$-module $V$ is said to be indecomposable
 if there do not exist two non-zero $G$ submodules $V^\prime, V^{\prime\prime}$ of
 $V$ such that $V \simeq V^\prime \oplus V^{\prime\prime}$.
 \end{defn}
 
 We remind the reader that a reductive algebraic group over $k$ is a linear algebraic
 group whose maximal connected, normal, unipotent subgroup is trivial.  Every reductive
 algebraic group over $k$ is defined over $\bF_p$.
 
 \begin{prop}
 \label{prop:irred}
 Let $G$ be a reductive algebraic group over $k$, $B \subset G$ a Borel subgroup, and $T \subset B$ 
 a maximal torus.  
 There is a 1-1 correspondence between the dominant weights $X(T)_+ \subset X(T)$
 and (isomorphism classes of) irreducible $G$-modules.  Namely, to a dominant
 weight $\lambda$, one associates the irreducible $G$-module 
  \begin{equation}
 \label{eqn:Llambda}
 L_\lambda \ \equiv \ soc_G(ind_B^G(k_\lambda))
 \end{equation}
(where the socle of a $G$-module is the direct sum of all irreducible $G$-submodules).
 Here, $k_\lambda$ is the 1-dimensional $B$-module with algebraic action 
 $B \times k_\lambda \to k_\lambda$ sending $(b,a)$ to $\lambda(\ol b)a$, where $\ol b \in T$ 
 is the image of $b$ in the quotient $B \twoheadrightarrow T$ and $T \times k_\lambda \to k_\lambda$
 has adjoint $\lambda: T \to \bG_m$.  Moreover, the canonical
 map $ind_B^G(k_\lambda) \to k_\lambda$ identifies $k_\lambda$ with the (1-dimensional)
 $\lambda$-weight space of $ind_B^G(k_\lambda)$, and $\lambda$ is the unique highest
 weight of $ind_B^G(k_\lambda)$ and of $L_\lambda$.
 \end{prop}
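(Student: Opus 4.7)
The plan is to follow the classical highest weight classification, but since we are in characteristic $p>0$ we have to work a little more carefully than in the complex setting. The three things to verify are: (a) every irreducible $G$-module has a well-defined highest weight $\lambda$ which is dominant; (b) distinct dominant weights give non-isomorphic irreducibles; (c) every dominant weight $\lambda$ actually arises, with the irreducible realized as $\Soc_G(\Ind_B^G(k_\lambda))$.

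First I would reduce to finite-dimensional modules. By Proposition \ref{prop:alternate} and the fact that every $k[G]$-comodule is the union of its finite-dimensional subcomodules, any irreducible $G$-module is finite-dimensional. Next, given an irreducible $V$, apply Lie--Kolchin to the solvable closed subgroup $B$: the fixed subspace $V^U$ is nonzero because $U$ is unipotent acting on a finite-dimensional space, and it is $T$-stable because $T$ normalizes $U$. Hence by Proposition \ref{diagonalizable}, $V^U$ decomposes into $T$-weight spaces. Choose a weight $\lambda$ of $V^U$ which is maximal with respect to the partial order defined by positive roots; such a $\lambda$ exists since $V$ is finite dimensional, and it is characterized intrinsically as the unique weight of $V$ such that $\lambda + \alpha$ is not a weight for any positive root $\alpha$. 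To see that $\lambda$ is dominant, restrict $V$ along the $\SL_2$-subgroup $\phi_\alpha : \SL_{2} \to G$ associated with a simple root $\alpha$: inside $V$ generated under this $\SL_2$ by the highest-weight line, the representation theory of $\SL_2$ forces $\langle \lambda, \alpha^\vee\rangle \geq 0$.

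For the uniqueness/injectivity of $\lambda \mapsto L_\lambda$, I would invoke Frobenius reciprocity (the adjointness in Definition \ref{defn:induced}): projection onto the $\lambda$-line of $V^U$ gives a nonzero $B$-homomorphism $V \to k_\lambda$, hence a nonzero $G$-homomorphism $V \to \Ind_B^G(k_\lambda)$; irreducibility of $V$ makes this injective, so $V \hookrightarrow \Soc_G(\Ind_B^G(k_\lambda))$. To conclude that this socle is itself irreducible, one shows the $\lambda$-weight space of $\Ind_B^G(k_\lambda)$ is one-dimensional (using that $\Ind_B^G(k_\lambda)$ has $\lambda$ as its unique highest weight, which can be checked on the level of $T$-characters using the Bruhat decomposition of $G/B$ and examining which weights appear in sections near the unique $B$-fixed point). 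Any two irreducible submodules of $\Soc_G(\Ind_B^G(k_\lambda))$ must each contain a $\lambda$-highest-weight line, and since there is only one such line, they coincide; thus $L_\lambda$ is simple and its highest weight is $\lambda$, so distinct dominant weights produce non-isomorphic irreducibles.

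The main obstacle is the existence part: one must show $\Ind_B^G(k_\lambda) \neq 0$ (equivalently, $\Soc_G(\Ind_B^G(k_\lambda)) \neq 0$) for every dominant $\lambda$. In characteristic zero this follows at once from Weyl's character formula, but in characteristic $p>0$ it is genuinely nontrivial. My plan is to construct Weyl modules $V(\lambda) = \Ind_{B^+}^G(k_{-w_0 \lambda})^*$ (the duals of induced modules for the opposite Borel), exhibit a nonzero $G$-homomorphism $V(\lambda) \to \Ind_B^G(k_\lambda)$ by producing a vector of weight $\lambda$ fixed by $U$, and conclude $\Ind_B^G(k_\lambda) \neq 0$. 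Equivalently, one may appeal to Kempf's vanishing theorem, which gives $H^0(G/B, \cL_\lambda) \neq 0$ for dominant $\lambda$; the geometric incarnation of $\Ind_B^G(k_\lambda)$ is exactly the global sections of the line bundle $\cL_\lambda$ on the flag variety $G/B$, so Kempf vanishing delivers what we need. Finally, combining the existence with the universality of $L_\lambda$ among irreducibles of highest weight $\lambda$ yields the stated bijection, and the uniqueness of the highest weight (both in $\Ind_B^G(k_\lambda)$ and in $L_\lambda$) is automatic from the one-dimensionality of the $\lambda$-weight space of the induced module noted above.
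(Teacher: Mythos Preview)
The paper does not supply a proof of this proposition; it is stated without argument as part of the survey material in Lecture~II, with Jantzen's book \cite{J} given as the blanket reference for the lecture. Your outline is essentially the standard textbook argument one finds there (see \cite[II.2]{J}): local finiteness reduces to finite dimension, the unipotent radical $U$ has a nonzero fixed line giving a $B$-eigenvector, Frobenius reciprocity embeds any irreducible $V$ of highest weight $\lambda$ into $\Ind_B^G(k_\lambda)$, and one-dimensionality of the $\lambda$-weight space of the induced module forces the socle to be simple.

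One small correction on the existence step: Kempf's vanishing theorem is the assertion that $H^i(G/B,\cL_\lambda)=0$ for $i>0$ and $\lambda$ dominant; it does not by itself say $H^0\neq 0$. The nonvanishing of $\Ind_B^G(k_\lambda)$ for dominant $\lambda$ is more elementary and is handled in \cite{J} by induction in stages through minimal parabolics, reducing to a direct rank-one ($\SL_2$) calculation. Your Weyl-module alternative is also standard, but note that as you have written it, $V(\lambda)$ is the dual of an induced module for the opposite Borel, so its nonvanishing is the same question again; the usual way to make that route non-circular is to construct $V(\lambda)$ independently by reduction mod $p$ from a $\bZ$-form of the characteristic-zero irreducible.
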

 
 Although $ind_B^G(k_\lambda)$ as in Proposition \ref{prop:irred} is indecomposable, 
the inclusion $L_\lambda \ \subset ind_B^G(k\lambda)$ is an equality only for ``small" $\lambda$.
 
\vskip .2in

\subsection{Representations of Frobenius kernels}
 
 In this subsection, $G$ will denote a linear algebraic group over $k$. We briefly investigate the algebraic representations
 of the group scheme $G_{(r)} \ \equiv \ ker \{ F^r: G \to G^{(r)} \}$.
 
 \begin{prop}
 For any $r > 0$, the coordinate algebra $k[G_{(r)}]$ of $G_{(r)}$ is a finite dimensional, local (commutative)
 $k$-algebra.  Moreover, as a $G_{(r)}$ representation, $k[G_{(r)}]$ is isomorphic to its $k$-linear
 dual $kG_{(r)}$.  Consequently, in the category of $k[G_{(r)}]$-comodules (naturally isomorphic to the
 category of  $kG_{(r)}$-modules), an object is injective if and only if it is projective.
 \end{prop}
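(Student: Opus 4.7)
The plan is to break the statement into three parts and handle them in sequence, using the concrete description of $k[G_{(r)}]$ given earlier in the paper and the general theory of finite dimensional Hopf algebras.

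First I would establish that $k[G_{(r)}]$ is a finite dimensional, local, commutative $k$-algebra. Commutativity is inherited from $k[G]$. For the other two properties, I would invoke the explicit description stated in the text: $k[G_{(r)}] = k[G]/m^{p^r}$, where $m \subset k[G]$ is the maximal ideal at the identity. Since $G$ is a reduced, irreducible affine scheme of finite type, $k[G]$ is a finitely generated integral domain and $m$ is a maximal ideal of a Noetherian ring, so $k[G]/m^n$ is finite dimensional over $k = k[G]/m$ for every $n \geq 1$ (one can filter by the powers $m^i/m^{i+1}$, each a finite dimensional vector space). Locality follows because the only prime ideal containing $m^{p^r}$ is $m$ itself, so $k[G_{(r)}]$ has the unique maximal ideal $m/m^{p^r}$.

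Next I would prove that $k[G_{(r)}] \simeq kG_{(r)}$ as $G_{(r)}$-modules, which is the crux of the statement. Since $k[G_{(r)}]$ is a finite dimensional commutative Hopf algebra over $k$, the dual $kG_{(r)} = k[G_{(r)}]^{\#}$ is also a finite dimensional Hopf algebra. The key input is the Larson--Sweedler theorem: any finite dimensional Hopf algebra $H$ over a field admits a nonzero left integral $\Lambda \in H$, and the map $H^{\#} \to H$ given by $f \mapsto (f \otimes \mathrm{id})\Delta(\Lambda)$ is an isomorphism of left $H$-modules. Applying this with $H = kG_{(r)}$ gives an isomorphism $k[G_{(r)}] \simeq kG_{(r)}$ of left $kG_{(r)}$-modules, which under the equivalence in the remark preceding this proposition is exactly an isomorphism of $k[G_{(r)}]$-comodules, i.e.\ of $G_{(r)}$-modules.

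Finally, to conclude the equivalence of injectivity and projectivity, I would observe that the above self-duality makes the finite dimensional algebra $A = kG_{(r)}$ a Frobenius algebra (hence quasi-Frobenius / self-injective): the regular representation $A$ is isomorphic to $A^{\#}$, and $A^{\#}$ is an injective cogenerator in $A$-mod because $\Hom_k(-,k)$ carries free right $A$-modules to injective left $A$-modules. Thus every projective $A$-module is a summand of a free module, hence a summand of an injective, hence injective; conversely every injective $A$-module embeds into (and splits off) a free module, hence is projective. Translating back via the equivalence between $kG_{(r)}$-modules and $k[G_{(r)}]$-comodules gives the result.

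The main obstacle is the middle step: verifying carefully that the Larson--Sweedler isomorphism $k[G_{(r)}] \simeq kG_{(r)}$ intertwines the $G_{(r)}$-module structure coming from the right regular coaction on $k[G_{(r)}]$ with the multiplication action on $kG_{(r)}$. The other two steps are essentially bookkeeping given the setup already present in the text.
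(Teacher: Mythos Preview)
The paper states this proposition without proof; it is recorded as background, with Jantzen \cite{J} (particularly I.8) as the implicit reference. Your argument is correct and follows the standard route via Larson--Sweedler, so there is nothing to compare against.

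One minor remark on your first step: the description $k[G_{(r)}] = k[G]/m^{p^r}$ that you quote from the text is not quite literally correct (the imprecision is the paper's, not yours). The kernel of $F^r$ has coordinate ring $k[G]$ modulo the ideal generated by $\{f^{p^r} : f \in m\}$; when $m = (x_1,\dots,x_n)$ this is $(x_1^{p^r},\dots,x_n^{p^r})$, which is contained in but generally strictly smaller than $m^{p^r}$. Your arguments for finite-dimensionality and locality are unaffected, since both ideals have radical $m$ and yield Artinian local quotients. The Larson--Sweedler step and the Frobenius-algebra deduction of projective $\Leftrightarrow$ injective are exactly the standard approach, and your flagging of the module-structure compatibility as the point requiring care is appropriate.
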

 
 For $r= 1$, $kG_{(1)}$ is isomorphic as an algebra to the restricted enveloping algebra
 of $Lie(G)$ (see Definition \ref{defn:restricted}).
 
  We remark that since $kG_{(r)}$ is a finite dimensional $k$-algebra which is injective as an algebra over itself,
 many standard techniques for studying the representation theory of Artin algebras over $k$ apply.
 Of course, $kG_{(r)}$ has more structure: it is a cocommutative Hopf algebra.
 
 For simplicity, assume $G$ is defined over the prime field $\bF_p$ (which means that the Hopf
 algebra $k[G]$ arises as the base change of a Hopf algebra over $\bF_p$, $k[G] = k\otimes_{\bF_p} \bF_p[G]$).
 This assumption enables us view the Frobenius map as an endomorphism of $G$, $F: G \to G$.

 \begin{defn}
 \label{Frobenius twist}
 Given an algebraic action \ $\mu: G \times V \ \to \ V$ \ of an affine group scheme $G$ on
 a $k$-vector space $V$, we define the (first) Frobenius twist $V^{(1)}$ of $V$ to be the $k$-vector space
 whose underlying abelian group equals that of $V$ and whose $k$-linear action is given by
  $c \cdot v^{(1)} \ = \ (c^p v^{(1)})^{(1)}$, for $c \in k, v^{(1)} \in V$; the algebraic action of $G$ on $V^{(1)}$ is defined as 
 $$\mu^{(1)} \circ (F\times 1): G\times V^{(1)} \ \to \ G^{(1)} \times V^{(1)} \ \to \ V^{(1)}.$$
  
 We inductively define $V^{(r+1)}$ to be $(V^{(r)})^{(1)}$ for any $r \geq 0$.
  
 Since $G_{(r)}$ is the kernel of $F^r$, we immediately conclude that the action of $G_{(r)}$ on $V^{(r)}$ 
 is trivial.
 \end{defn} 
  
 \begin{ex}
 \label{ex:Steinberg}
 Let $G$ be a simply connected, semi-simple algebraic group over $k$ and consider the irreducible $G$-module $L(\lambda)$ of 
 highest weight $\lambda$.  The Steinberg tensor product theorem \cite{St} asserts that 
 \begin{equation}
 L(\lambda) \ \simeq \ L(\lambda_1) \otimes L(\lambda_2)^{(1)} \otimes \cdots \otimes L(\lambda_s)^{(s)}
 \end{equation}
 where $\lambda = \sum_{i=0}^s p^i\lambda_i$ and each $\lambda_i$ is a $p$-restricted dominant weight.
 The condition that $\lambda$ be restricted is defined combinatorially, but is equivalent to the condition that 
 $L(\lambda)$ restricts to an irreducible $G_{(1)}$-module.
 
 In other words, each irreducible  $G$-module is a tensor product of Frobenius twists of $G$-modules which
 arise as irreducible restricted representations of $U^{[p]}(\fg)$.
  \end{ex}
  
  Example \ref{ex:Steinberg} emphasizes that restricting a $G$-module $V$ to  $kG_{(1)}$
  (we view this as taking the first order approximation of the $G$-action) loses enormous amount of
  information:  for example, irreducible $G$-modules $L_\lambda, \ L_{\lambda^\prime}$  have isomorphic restrictions
  to $kG_{(1)}$ if and only if $\lambda - \lambda^\prime$ can be written as a 
  difference of $p$-multiples of dominant weights.

 The following theorem of J. Sullivan in \cite{Sull} reveals the close connection of the representation theory of
 the family $\{ G_{(r)}, \ r > 0 \}$ of algebras with the rational representations of $G$.  Recall that if
 $A$ is a $k$-algebra and $M$ is an $A$-module, then $M$ is said to be locally finite if each finite
 dimensional subspace of $M$ is contained in some finite dimensional $A$-submodule of $M$.
 
 \begin{thm}
 Let $G$ be a simply connected, simple algebraic group over $k$.  Then there is an equivalence
 of categories between the category of  $G$-modules and locally finite modules for the 
 $k$-algebra $\varinjlim_r kG_{(r)}$.
 \end{thm}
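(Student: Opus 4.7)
The plan is to exhibit the restriction functor $\Phi : G\text{-mod} \to \varinjlim_r kG_{(r)}\text{-mod}_{\mathrm{lf}}$ as the desired equivalence. Given a $G$-module $V$ with comodule structure $\Delta_V : V \to V \otimes k[G]$, composition with each projection $k[G] \twoheadrightarrow k[G_{(r)}]$ equips $V$ with a compatible family of $k[G_{(r)}]$-comodule structures, equivalently $kG_{(r)}$-module structures (Proposition \ref{prop:alternate} and the ensuing remark); passing to the colimit produces a $\varinjlim_r kG_{(r)}$-action. Local finiteness of $\Phi(V)$ is automatic, since $k[G]$-comodules are always locally finite.

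Faithfulness of $\Phi$ is immediate because $\Phi$ is the identity on underlying vector spaces. For fullness, let $\phi : V \to W$ be $\varinjlim_r kG_{(r)}$-equivariant, and write $I_{(r)} = \ker(k[G] \to k[G_{(r)}])$. The equivariance translates to the statement that the two maps $(\phi \otimes \id)\circ \Delta_V$ and $\Delta_W \circ \phi : V \to W \otimes k[G]$ agree modulo $W \otimes I_{(r)}$ for every $r$; hence their difference lies in $W \otimes \bigcap_r I_{(r)}$. Because $G$ is reduced and irreducible, $k[G]$ is an integral domain, and $I_{(r)} \subset \mathfrak{m}_e^{p^r}$ where $\mathfrak{m}_e$ is the maximal ideal of the identity. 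Krull's intersection theorem in the Noetherian local ring $\mathcal{O}_{G,e}$ gives $\bigcap_r \mathfrak{m}_e^{p^r} = 0$; any element of $k[G]$ lying in all $I_{(r)}$ therefore has zero germ at $e$ and vanishes globally by integrality, so the two maps coincide.

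The crux is essential surjectivity, and here the simply-connected-simple hypothesis enters decisively. Given a locally finite $\varinjlim_r kG_{(r)}$-module $M$, the strategy is to produce a coaction $M \to M \otimes k[G]$ by identifying $k[G]$ with the subspace of $(\varinjlim_r kG_{(r)})^*$ spanned by matrix coefficients of finite-dimensional modules that arise by restriction from $G$. Writing $M$ as the filtered colimit of its finite-dimensional $\varinjlim_r kG_{(r)}$-submodules, it suffices to show that every such finite-dimensional $N$ carries a (necessarily unique, by the fullness step above) $G$-module structure restricting to the given one. I would proceed by induction on composition length: irreducibles are handled by the Steinberg tensor product theorem (Example \ref{ex:Steinberg}), which realizes every irreducible $kG_{(r)}$-module, for $G$ simply connected simple, as the restriction of a suitable $L(\lambda)$; extensions are then lifted using the Hopf-algebra compatibility of $\varinjlim_r kG_{(r)}$ with $k[G]$ together with a comparison between $\Ext^1_G$ and the inverse system $\{\Ext^1_{G_{(r)}}\}$ on irreducible $G$-modules.

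The hardest step is the lifting of extensions. The simply-connected-simple hypothesis is what permits Steinberg's classification to enumerate irreducible $\varinjlim_r kG_{(r)}$-modules as restrictions of irreducible $G$-modules, and it underlies the cohomological comparison needed to match extension classes between $G$ and its Frobenius kernels. Without it, the functor $\Phi$ can fail essential surjectivity, as distinct linear algebraic groups may have isomorphic hyperalgebras (after forgetting the Hopf structure), so that new module structures appear on the $\varinjlim_r kG_{(r)}$ side which do not lift to any $G$-action.
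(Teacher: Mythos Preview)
The paper does not prove this theorem; it is stated as a result of Sullivan \cite{Sull} (see also \cite{CPS2}) and used without argument. So there is no ``paper's proof'' to compare your proposal against, and I can only assess the proposal on its own merits.

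Your faithfulness and fullness arguments are correct and standard. The genuine content lies in essential surjectivity, and here the proposal has two gaps that together constitute the real difficulty of the theorem.

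First, the irreducible step is not quite handled by what you cite. The Steinberg tensor product theorem (Example \ref{ex:Steinberg}) decomposes irreducible $G$-modules; it does not by itself classify irreducible modules for the hyperalgebra $\varinjlim_r kG_{(r)}$. A simple finite-dimensional hyperalgebra module need not be simple over any particular $G_{(r)}$, so the statement ``every irreducible $kG_{(r)}$-module is the restriction of some $L(\lambda)$'' is not what is required. One must develop a weight theory intrinsic to the hyperalgebra (via the sub-Hopf-algebra $\varinjlim_r kT_{(r)}$) and argue that the weights of a finite-dimensional module lie in $X(T)$ rather than merely in its $p$-adic completion $\varprojlim_r X(T)/p^rX(T)$. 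This is exactly where simple connectedness enters, and it is not a consequence of Steinberg's theorem.

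Second, and more seriously, the extension step is only asserted. You write that extensions are lifted via ``a comparison between $\Ext^1_G$ and the inverse system $\{\Ext^1_{G_{(r)}}\}$,'' but establishing that the restriction map $\Ext^1_G(L(\lambda),L(\mu)) \to \Ext^1_{\mathrm{hyp}}(L(\lambda),L(\mu))$ is surjective is essentially equivalent in difficulty to the theorem itself. Sullivan's argument (and the alternative in \cite{CPS2}) does not proceed by this kind of induction on composition length; rather, one identifies $k[G]$ with the Hopf algebra of matrix coefficients of finite-dimensional hyperalgebra modules and shows directly that every locally finite hyperalgebra module is a comodule for it. Your outline correctly locates the hard step but does not supply the mechanism to carry it out.
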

 
 \vskip 1in
 
 %%%%%%%%%%%%%%%%%%%%%%%%
 %%%%%%%%%%%%%%%%%%%%%%%%%

 \section{Lecture III: Cohomological support varieties}
 
 In this lecture we provide a quick overview of the theory of cohomological support varieties
 for finite groups, $p$-restricted Lie algebras, and finite group schemes.   In the lecture,
 the author discussed a comparison between one formulation of cohomological
 support varieties for linear algebraic groups and the theory discussed in the final lecture
 (i.e., Lecture IV) using 1-parameter subgroups.    In the text below, we briefly discuss 
 very recent computations for unipotent linear algebraic groups.
 
 We begin with the outline prepared in advance of the lectures, an outline which
 does not well summarize the text which follows.

\vskip .1in
\noindent
III.A   Indecomposable versus irreducible.
       
        i.)    examples of semi-simplicity;
        
        ii.)   examples of $(\bZ/p)^{\times n}$;
       
        iii.)  concept of wild representation type.
        
        \vskip .1in
\noindent
III.B   Derived functors.
        
        i.)    left exact functors, $(-)^G = Hom_{G-mod}(k,-)$;
       
        ii.)   injective resolutions and right derived functors;
        
        iii.)  $Ext_G^1(k,M)$;
        
        iv.)   representation of $Ext_G^i(k,M)$ as equivalence classes of
                  extensions.

\vskip .1in
\noindent
III.C   Commutative algebras and affine varieties.
        
        i.)    $\Spec A$, the prime ideal spectrum;
        
        ii.)   elementary examples;
       
        iii.)  $\Spec H^\bu(G,k)$;
        
        iv.)   (Krull) dimension and growth;
       
        v.)    $\Spec H^\bu(G,k)/ann(Ext_G^*(M,M))$;
       
        vi.)   Quillen's stratification theorem.
        
        vii.)  Carlson's conjecture for $G = (\bZ/p)^{\times n}$.

\vskip .1in
\noindent
III.D   Linear algebraic groups.
        
        i.)    $H^*(\bG_a,k)$;
        
        ii.)   $H^\bu(U_3,k)_{red}$;
        
        iii.)  Definitions of $V^{coh}(G), V^{coh}(G)_M$.

\vskip .1in
\noindent
Topics for discussion/projects:

\vskip .1in
\noindent
III.A   Presentation of finite/tame/wild representation type.
        Presentation of families of indecomposable $(\Z/p)^{\times 2}$-modules.

\vskip .1in
\noindent
III.B   Exposition of representation of $Ext_G^i(N,M)$ by extension classes.
        Discussion of other derived functors.
        Project on spectral sequences.

\vskip .1in
\noindent
III.C   Discussion of algebraic curves over $k$.
        Hilbert Nullstellensatz.
        Computation of $H^*((\Z/p)^{\times n},k)$.

\vskip .1in
\noindent
III.D    Open questions about detection modulo nilpotents and finite
             generation.
             
\vskip .1in

\subsection{Indecomposable versus irreducible}

We revisit the distinction between irreducible and indecomposable as defined
in Definition \ref{defn:irred-ind}.

Let $R$ be a (unital associative) ring and consider two left $R$-modules $M, N$.  Then an extension
of $M$ by $N$ is a short exact sequence $0 \to N \to E \to M \to 0$ of left $R$-modules.
We utilize the  equivalence relation on such extensions for fixed $R$-modules $M, N$
as the equivalence relation generated by commutative diagrams of $R$-modules 
of the form
\begin{equation}
\label{cocartesian}
\begin{xy}*!C\xybox{%
\xymatrix{
0 \ar[r] & N \ar[d]^= \ar[r] & E \ar[d] \ar[r] & M \ar[d]^= \ar[r] & \ 0 \\
0 \ar[r] & N  \ar[r] & E^\prime  \ar[r] & M \ar[r] & \ 0}
}\end{xy}
\end{equation}
relating the upper extension to the lower extension.
The set of such extensions of $M$ by $N$ form an abelian group
denoted $Ext_R^1(M,N)$.  
Cohomology groups (i.e., $Ext$-groups) at their most basic level 
are invariants devoted to detecting inequivalent extensions.
Rather than give information about basic building blocks (i.e.,
irreducible $R$-modules), cohomology can be used to show that a
pair of indecomposable $R$-modules with the same irreducible
``constituents" are not isomorphic.

For some purposes, one ``kills" such extensions by considering the 
Grothendieck group $K_0^\prime(R)$ defined as the free abelian 
group on the set of isomorphism classes of left $R$-modules 
modulo the equivalence relation $E \sim M \oplus N$ whenever
$E$ is an extension of $M$ by $N$.    This construction eliminates
the role of cohomology.    Said differently, if $R$ satisfies the condition
that every $R$-module splits as a direct sum of irreducible modules, then
(positive degree) cohomology groups $Ext_R^i(M,N)$ vanish.

Rather than consider an abelian category of $R$-modules, we shall
consider the abelian category $Mod_k(G)$ of  $G$-modules for an affine
group scheme $G$ over $k$.  If $k[G]$ is finite dimensional over $k$, then
$Mod_k(G)$ is isomorphic to the category $Mod(R)$ of left $R$-modules,
where $ R= kG$; for any affine group scheme, $Mod_k(G)$  
is equivalent to the abelian category of $k[G]$-comodules.

The representation theory of $G$ is said to be {\bf semi-simple} if every indecomposable
  $G$-module is irreducible.  
  
\begin{ex}
\begin{enumerate}
\label{ex:indecomposable}
\item
Let $G$ be a diagonalizable affine group scheme as in Definition \ref{defn:character}.
Then the representation theory of $G$ is semi-simple. 
\item
Let $G$ be the finite group $\bZ/p$; the coordinate algebra of $\bZ/p$ equals 
$Hom_{sets}(\bZ/p, k)$ whose dual algebra is the group algebra $k\bZ/p = k[x]/(x^p-1) \simeq k[t]/t^p$.  
There are $p$ distinct isomorphism classes of indecomposable $\bZ/p$-modules, represented (as
modules for $k[t]/t^p$) by the quotients $k[t]/t^i, \ 1 \leq i \leq p$ of $k[t]/t^p$.  
Only the 1-dimensional ``trivial" $kG$-module $k$ is irreducible.
\item
Let $G = GL_{n(1)}$, so that $kG \simeq U^{[p]}(\gl_n)$ and let $V = S^p(k^n) \simeq k[x_1,\ldots,x_n]_p$ 
denote the $p$-fold symmetric power of the defining representation $k^n$ of $GL_n$ 
(see Example \ref{ex:defining}).  Consider the subspace $W \ \subset V$ spanned by
$\{x_1^p,\ldots,x_n^p \}$.  Then $W$ is a $G$-submodule of $V$, but there does not exist another
$G$-submodule $V^\prime \subset V$ such that $V \simeq W \oplus V^\prime$.
\end{enumerate}
\end{ex}

In some sense, the ``ultimate goal" of the representation theory of $G$ is the description of all
isomorphism classes of indecomposable $G$-modules (as for $G = \Z/p$ in Example \ref{ex:indecomposable}.2.)
However, this goal is far too optimistic.  Even for $G = \bZ/p^{\times r}$ (for $r \geq 3$; for $p > 2$, we need
only that $r \geq 2$),
the representation theory of $G$ is ``wild", a condition which can be formulated as the condition that the  
abelian category of the finite dimensional representations of {\it any} finite dimensional $k$-algebra $\Lambda$ can 
be embedded in the abelian category $mod_k(G)$ of finite dimensional $G$-modules. (See, for example, \cite{BD}.)

\vskip .2in

\subsection{Derived functors}
We assume that the reader is familiar with the basics of homological algebra.  We refer the reader to C. Weibel's
book ``An Introduction to Homological Algebra" \cite{Wei} for background.  In our context, the
role of cohomology is to give information
about the structure of indecomposable $G$-modules, structure that arises by successive
extensions of irreducible $G$-modules. 

The following proposition (see \cite{J}) insures that the abelian category $Mod_k(G)$ has enough injectives,
thereby enabling the formulation of the $Ext_G^i(M,N)$ groups as right derived functors of the functor
$$Hom_G(M,-): Mod_k(G) \ \to \ (Ab)$$
from the abelian category of $G$-modules to the abelian category of abelian groups.  (Indeed, this functor 
takes values in the abelian category of $k$-vector spaces.)  As mentioned in the introduction, for ``most"
linear algebraic groups $G$, $Mod_k(G)$ has no non-trivial projectives \cite{Donk} so that we can not
define $Ext_G^*(-,-)$-groups by using a projective resolution of the contravariant variable.

\begin{prop}
\label{prop:enough}
Let $G$ be an affine group scheme over $k$.  Then $k[G]$ (with $G$-action given as the left regular representation)
is an injective $G$-module.  Moreover, if $M$ is any  $G$-module, then 
$M$ admits a natural embedding $M \hookrightarrow M \otimes k[G]$ and $M \otimes k[G]$ is an injective
$G$-module.
\end{prop}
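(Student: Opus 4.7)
The plan is to handle the three claims in order: the natural embedding, the injectivity of $k[G]$, and then the injectivity of $M \otimes k[G]$.

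First I would construct the embedding $M \hookrightarrow M \otimes k[G]$ as the comodule structure map $\Delta_M: M \to M \otimes k[G]$ guaranteed by Proposition \ref{prop:alternate}(1). The counit axiom for a $k[G]$-comodule gives $(\mathrm{id}_M \otimes \epsilon) \circ \Delta_M = \mathrm{id}_M$, where $\epsilon: k[G] \to k$ is the counit (``evaluation at the identity $e \in G$''); this immediately forces $\Delta_M$ to be injective. Naturality in $M$ is built into the definition of a comodule morphism.

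For the injectivity of $k[G]$, the key point is that $k[G]$ carries two commuting $G$-actions arising from $\Delta_G$, the left and right regular representations; we use the left regular on $k[G]$ as our $G$-module structure throughout. I would then establish a Frobenius-type adjunction: for every $G$-module $N$, the map
\begin{equation*}
\mathrm{Hom}_G(N, k[G]) \ \xrightarrow{\epsilon_*} \ \mathrm{Hom}_k(N, k), \qquad \varphi \mapsto \epsilon \circ \varphi,
\end{equation*}
is an isomorphism of $k$-vector spaces, with inverse sending $f : N \to k$ to the composition $(f \otimes \mathrm{id}_{k[G]}) \circ \Delta_N$. The verification that these two maps are mutually inverse reduces to the counit and coassociativity axioms together with the fact that $\Delta_N$ is $G$-equivariant (with respect to the left regular action on $k[G]$). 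Since $\mathrm{Hom}_k(-,k)$ is exact on $k$-vector spaces, the functor $\mathrm{Hom}_G(-, k[G])$ is exact, which is exactly the injectivity of $k[G]$ as a $G$-module.

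For the last claim, I would equip $M \otimes k[G]$ with the ``external'' $G$-action in which $G$ acts trivially on $M$ and via the left regular representation on $k[G]$. The same Frobenius reciprocity argument as above, applied to $M \otimes k[G]$ in place of $k[G]$, gives a natural isomorphism $\mathrm{Hom}_G(N, M \otimes k[G]) \simeq \mathrm{Hom}_k(N, M)$, so $M \otimes k[G]$ is injective with this action. To transfer this conclusion to whatever $G$-action is most naturally intended (the ``diagonal'' action using $\Delta_M$ on the first factor and the left regular representation on the second), I would write down the $k$-linear automorphism
\begin{equation*}
\tau: M \otimes k[G] \ \longrightarrow \ M \otimes k[G], \qquad m \otimes f \ \longmapsto \ \sum m_{(0)} \otimes m_{(1)} \cdot f,
\end{equation*}
(Sweedler notation for $\Delta_M(m) = \sum m_{(0)} \otimes m_{(1)}$) and check that $\tau$ is a $G$-module isomorphism between the diagonal structure and the external structure. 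The expected obstacle is not any single step but the careful bookkeeping of the two compatible actions on $k[G]$ and the identification of the diagonal and external actions on $M \otimes k[G]$; once these bookkeeping points are fixed, the whole proposition reduces to the counit axiom plus the one adjunction $\mathrm{Hom}_G(-,k[G]) \simeq \mathrm{Hom}_k(-,k)$.
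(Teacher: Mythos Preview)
Your argument is correct and is essentially the standard proof found in Jantzen \cite{J}, which is precisely what the paper invokes: the proposition is stated with the parenthetical ``(see \cite{J})'' and no proof is given in the text. The three ingredients you isolate --- the counit axiom for injectivity of $\Delta_M$, the Frobenius-type adjunction $\Hom_G(-,k[G]) \simeq \Hom_k(-,k)$, and the ``tensor identity'' automorphism $\tau$ intertwining the diagonal and external actions on $M\otimes k[G]$ --- are exactly the pieces Jantzen assembles in I.3.7--I.3.9.

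One small bookkeeping point worth tightening: $\Delta_M$ is a $G$-module map into $M\otimes k[G]$ with the \emph{external} action (this is coassociativity), not directly into the diagonal structure; your map $\tau$ then transports the embedding to whichever action you prefer. You acknowledge this distinction later, but it would be cleaner to say up front which action the target carries when you first write the embedding. Also be mindful that the paper's Example \ref{ex:groupalg}(1) identifies $\Delta_G$ with the \emph{right} regular action, so a careful reader may want you to reconcile this with the ``left regular'' phrasing in the proposition; the argument goes through verbatim under either convention once fixed consistently.
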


\begin{defn}
\label{defn:cohomology}
Let $G$ be an affine group scheme over $k$.   For any pair of $G$-modules $M, N$ and any $i \geq 0$, we define
$$Ext_G^i(M,N) \ \equiv \ (R^i(Hom_G(M,-)))(N),$$
the value of the $i$-th right derived functor of $Hom_G(M,-)$ applied to $N$.

In particular, one has the graded commutative algebra $H^*(G,k) \ \equiv \ Ext_G^*(k,k)$.  For $p= 2, \ H^*(G,k)$ is
commutative.  For $p > 2$, we consider the commutative subalgebra $H^\bu(G,k) \subset H^*(G,k)$ generated by 
cohomology classes of even degree is a commutative $k$-algebra; for $p = 2$, we set $H^\bu(G,k)$ equal to the
commutative $k$-algebra $H^*(G,k)$.
An important theorem of B. Venkov \cite{V} 
and L. Evens \cite{Ev} asserts that
$H^*(G,k)$ is finitely generated for any finite group $G$; this was generalized to arbitrary finite group schemes by 
A. Suslin and the author \cite{FS}.
\end{defn}

\begin{remark}
One can describe $Ext^n_G(M,N)$ as the abelian group of equivalence classes of $n$-extensions of $M$ by $N$
(cf. \cite[III.5]{MacL}), where the equivalence relation arises by writing an $n$-extension as a composition
of $1$-extensions and using pushing forward and pulling back of $1$-extensions.
\end{remark}

\vskip .2in

\subsection{The Quillen variety $|G|$ and the cohomological support variety $|G|_M$}

In what follows, if $A$ is a finitely generated commutative $k$-algebra (such as $H^\bu(G,k)$
with grading ignored), then we denote by $\Spec A$ the affine scheme whose set of points
is the set of prime ideals of $A$ equipped with the Zariski topology and whose structure 
sheaf $\cO_{\Spec A}$ is a sheaf of commutative $k$-algebras whose value on $\Spec A$
is $A$ itself.  For $A = H^\bu(G,k)$, we denote by $|G|$ the topological space underlying $\Spec H^\bu(G,k)$;
in other words, we ignore the structure sheaf  $\cO_{\Spec A}$  on $G$.

The Atiyah-Swan conjecture for a finite group $G$ states that the {\it growth} of a minimal
projective resolution of $k$ as a $G$-module should be one less than the largest rank of 
elementary $p$-subgroup $E \simeq (\bZ/p)^{\times r} \subset G$.   This growth can be
seen to equal the Krull dimension of $H^\bu(G,k)$.

Daniel Quillen proved this conjecture and much more by introducing geometry into the study of $H^*(G,k)$.
A simplified version of Quillen's main theorem is the following.  Following Quillen, we let 
$\cE(G)$ be the category of elementary 
abelian $p$-groups of $G$ whose $Hom$-sets $Hom_{\cE}(E,E^\prime)$ consist of group homomorphisms
$E \to E^\prime$ which can be written as a composition of an inclusion followed by 
conjugation by an element of $G$.

\begin{thm}
\label{thm:quillen}
Let $G$ be a finite group.   If $\zeta \in H^*(G,k)$ is not nilpotent, then there exists some
elementary abelian $p$-subgroup $E \simeq (\bZ/p)^{\times r} \subset G$ such that $\zeta$
restricted to $H^*(E,k)$ is non-zero.

Furthermore, the morphisms $\Spec H^\bu(E,k) \to \Spec H^\bu(G,k)$ are natural with 
respect to $E \in \cE(G)$ and determine a homeomorphism
$$\varinjlim_{E < G} |E| \ \stackrel{\sim}{\to} \ |G|.$$
\end{thm}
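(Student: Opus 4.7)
The plan is to prove the two assertions in sequence, deducing the homeomorphism in the second from the detection statement in the first. I would begin by reducing the first assertion to the case of a $p$-group via transfer: if $P \subset G$ is a Sylow $p$-subgroup, the composition of restriction $\res^G_P$ with the transfer map equals multiplication by the $p'$-index $[G:P]$, hence $\res^G_P$ is injective on $H^*(G,k)$. In particular, restriction carries non-nilpotent classes to non-nilpotent classes, so it suffices to show that for a finite $p$-group $P$, any $\zeta \in H^\bu(P,k)$ restricting trivially to every elementary abelian subgroup is nilpotent.

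For this latter statement I would argue by induction on $|P|$ using Serre's theorem on products of Bocksteins: if $P$ is not itself elementary abelian, there exist nonzero classes $u_1,\ldots,u_n \in H^1(P,\bF_p)$ with $\beta(u_1)\cdots\beta(u_n) = 0$ in $H^\bu(P,k)$ (with $u_1 \cdots u_n = 0$ replacing the Bockstein expression when $p=2$), arranged so that every maximal elementary abelian $E \subset P$ lies in $\ker(u_i)$ for at least one $i$. The restriction of $\beta(u_i)$ to any subgroup of $\ker(u_i)$ vanishes, so multiplying $\zeta$ through by suitable Bockstein monomials yields classes whose restrictions to every $E$ vanish trivially and which factor through proper subgroups of $P$. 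The inductive hypothesis, combined with the Venkov--Evens finite generation of $H^\bu(P,k)$, then forces a power of $\zeta$ itself to be zero.

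For the homeomorphism in the second assertion, surjectivity of $\varinjlim_E |E| \to |G|$ follows from the first assertion together with the Nullstellensatz: the kernel of $H^\bu(G,k) \to \prod_E H^\bu(E,k)$ consists of nilpotents, so on prime spectra the combined map has dense, and hence (since each individual restriction is a finite morphism by finite generation) closed, image covering $|G|$. For injectivity, I would follow Quillen's orbit analysis: each prime $\fp \in |E|$ is supported on a unique smallest subgroup $E_\fp \subset E$ (the intersection of kernels of linear forms in $\fp$), and two primes $\fp \in |E|$, $\fp' \in |E'|$ have the same image in $|G|$ precisely when some $g \in G$ carries $E_\fp$ to $E_{\fp'}$ and matches the primes accordingly. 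This is exactly the equivalence relation generated by the morphisms of $\cE(G)$, so the colimit map is injective; the upgrade to a homeomorphism follows because each structural morphism $\Spec H^\bu(E,k) \to \Spec H^\bu(G,k)$ is closed.

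The main obstacle is the detection statement in Part (1). Serre's theorem on vanishing products of Bocksteins itself has a delicate proof via Steenrod operations and transgression in the spectral sequence of a central extension, and extracting from it a uniform nilpotence statement controlling all classes simultaneously requires careful bookkeeping through the subgroup lattice of $P$. Once detection is in place, the translation from prime spectra to $G$-conjugacy classes of elementary abelian subgroups is comparatively formal commutative algebra together with elementary $p$-group theory.
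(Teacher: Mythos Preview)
The paper does not prove this theorem; it is stated as Quillen's result with references to \cite{Q1}, \cite{Q2} and then used as a black box, so there is no ``paper's own proof'' to compare against. Your sketch is essentially the Quillen--Venkov argument (the paper cites this as \cite{Q-V}), which is historically a later and more elementary route than Quillen's original proof via equivariant cohomology and the descent spectral sequence for the classifying space.

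Two points on your sketch of the detection statement. First, the clause ``arranged so that every maximal elementary abelian $E \subset P$ lies in $\ker(u_i)$ for at least one $i$'' is not what Serre's theorem gives you and is not the mechanism of the induction. Serre's theorem simply produces nonzero $u_1,\ldots,u_n \in H^1(P,\bF_p)$ with $\beta(u_1)\cdots\beta(u_n)=0$; each $\ker(u_i)$ is an index-$p$ subgroup, and the induction runs over these proper subgroups, not over elementary abelians directly. Second, the step you describe as ``multiplying $\zeta$ through by suitable Bockstein monomials'' is missing the actual lemma that makes it work: the Quillen--Venkov divisibility lemma, which says that if $u \in H^1(P,\bF_p)$ is nonzero and $\eta \in H^*(P,k)$ restricts to zero on $\ker(u)$, then $\eta$ is divisible by $\beta(u)$ (by $u$ when $p=2$). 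This is what converts ``$\zeta^m$ dies on each $\ker(u_i)$'' (which you get by induction) into ``$\zeta^{nm}$ is divisible by $\prod_i \beta(u_i) = 0$''. Without naming this lemma the inductive step does not close. Your outline for the second assertion is fine.
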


This is a fantastic theorem.  Before Quillen's work, we knew very little about computations
of group cohomology and this theorem applies to all finite groups.  However, it actually
does not compute any of the groups $H^i(G,k)$ for $i>0$.   For example, 
$H^i(GL_{2n}(\bF_{p^d}),k) = 0, 1 \leq i \leq f(n,d)$ with $\varinjlim_d f(n,d) = \infty$.  On
the other hand, Theorem \ref{thm:quillen} tells us that the 
Krull dimension of $H^\bu(GL_{2n}(\bF_{p^d}),k)$ equals $d\cdot n^2$, for this is 
the rank of the largest elementary abelian $p$-group inside $GL_{2n}(\bF_{p^d})$.

J. Alperin and L. Evens initiated in \cite{AE1} the study of the growth of projective resolutions for an arbitrary 
finite dimensional $kG$-module for a finite group $G$ (extending Quillen's theorem for the
trivial $k$-module $k$) .  This led Jon Carlson in \cite{Ca1} to introduce the following notion of
the support variety of a finite group.

\begin{defn}
\label{defn:cohsupp}
Let $G$ be a finite group and denote by $|G|$ the space (with the Zariski topology) underlying $\Spec H^\bu(G,k)$. 
For  any finite dimensional $kG$-module $M$, denote by $I(M)
\subset H^\bu(G,k)$ the ideal of those elements $\alpha$ such that $\alpha$ acts as 
0 on $Ext^*_G(M,M)$.  The cohomological support variety $|G|_M$ is the closed subset of $|G|$ defined as the
``zero locus" of $I(M)$.  In other words, 
$$|G|_M \ = \ \Spec H^\bu(G,k)/I(M) \ \subset \ |G|.$$
\end{defn}

We remark that the ideal $I(M)$ of Definition \ref{defn:cohsupp} is equal to the kernel of the natural
map of graded $k$-algebras $H^*(G,k) \ \to \ Ext_G^*(M,M)$ given in degree $n$ by tensoring an $n$-extension 
of $k$ by $k$ by $M$ to obtain an $n$-extension of $M$ by $M$.

The following theorem states two of Carlson's early results concerning support varieties, both of which
have subsequently been shown to generalize to all finite group schemes.  The second result is especially
important (as well as elegant).

\begin{thm} (J. Carlson, \cite{Ca2})
Let $G$ be a finite group. 
\begin{enumerate}
\item
If $M$ is a finite dimensional indecomposable $G$-module, then
the projectivization of $|G|_M$ is connected.
\item
Let $C \subset |G|$ be a (Zariski) closed, conical subvariety of $|G|$.  Once given a choice 
of generators for the ideal in $H^\bu(G,k)$ defining $C$, one can explicitly 
construct a finite dimensional $kG$-module $M_C$ such that \ $|G|_{M_C} \  \simeq \ C$.  
\end{enumerate}
\end{thm}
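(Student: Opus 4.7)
The plan is to prove both parts by invoking two fundamental tools: the Carlson $L_\zeta$-module construction, and the tensor product theorem $|G|_{M\otimes N} = |G|_M \cap |G|_N$ for finite groups. Recall that for a positive-degree homogeneous element $\zeta \in H^n(G,k)$, we represent $\zeta$ by a map $\hat\zeta : \Omega^n(k) \to k$ in the stable module category and define $L_\zeta$ to fit into a short exact sequence $0 \to L_\zeta \to \Omega^n(k)\oplus P \to k \to 0$ for some projective $P$. A preliminary step common to both parts is to establish $|G|_{L_\zeta} = V(\zeta) \subset |G|$, the hypersurface cut out by $\zeta$; this is verified by computing the action of $H^\bu(G,k)$ on $\Ext_G^*(L_\zeta,L_\zeta)$ through the long exact sequence obtained by applying $\Hom_G(L_\zeta,-)$ to the defining sequence, and observing that multiplication by $\zeta$ becomes null on $\Ext_G^*(L_\zeta,L_\zeta)$.

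For part (2), the proof is then immediate and constructive: choosing homogeneous generators $\zeta_1, \ldots, \zeta_r$ of (the radical of) the ideal defining $C$, set
\[ M_C \ = \ L_{\zeta_1} \otimes L_{\zeta_2} \otimes \cdots \otimes L_{\zeta_r}, \]
and iterate the tensor product theorem to obtain $|G|_{M_C} = V(\zeta_1) \cap \cdots \cap V(\zeta_r) = C$. The main obstacle here is not the construction itself but the tensor product theorem, whose proof rests on Quillen's stratification together with the rank-variety description of supports over each elementary abelian subgroup of $G$.

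For part (1), I would argue the contrapositive. Assume $|G|_M = V_1 \cup V_2$ with $V_1 \cap V_2 = \{0\}$ as closed conical subvarieties. By homogeneous prime avoidance applied to $H^\bu(G,k)/\sqrt{I(M)}$, I would produce classes $\zeta_1,\zeta_2$ of a common even degree such that each $\zeta_i$ vanishes identically on $V_i$ but is non-nilpotent on $V_{3-i}$; by construction $\zeta_1 + \zeta_2$ is then regular on $V_1 \cup V_2 = |G|_M$. The modules $M \otimes L_{\zeta_i}$ have supports contained in $V_{3-i}$, and splicing the defining sequences for $L_{\zeta_1}$ and $L_{\zeta_2}$ produces a Mayer--Vietoris style distinguished triangle in $\stmod(kG)$ that decomposes $M$ into two complementary stable summands whose supports are $V_1$ and $V_2$.

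The step I expect to be most delicate is lifting this stable decomposition to an honest decomposition $M \cong M_1 \oplus M_2$ of finite-dimensional $kG$-modules. This requires invoking Krull--Schmidt, absorbing projective summands carefully so that neither $M_i$ acquires an invisible projective piece with empty support, and using Carlson's sharpening that a finite-dimensional $kG$-module is projective if and only if its support variety is $\{0\}$. Once such a decomposition is established with both $|G|_{M_i} = V_i$ nonzero, indecomposability of $M$ is violated, completing the contradiction.
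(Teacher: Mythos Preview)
The paper does not supply a proof of this theorem: it is stated as a result of Carlson with a citation to \cite{Ca2} and no argument is given in the text. There is therefore nothing in the paper to compare your proposal against directly.

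That said, your outline is essentially Carlson's original argument as it appears in \cite{Ca1}, \cite{Ca2}. For part (2), the construction $M_C = L_{\zeta_1} \otimes \cdots \otimes L_{\zeta_r}$ together with the tensor product formula is exactly the intended one. For part (1), the strategy of choosing $\zeta_1,\zeta_2$ of a common degree with $V(\zeta_i)\cap |G|_M = V_i$ and $\zeta_1+\zeta_2$ a non-zero-divisor on $|G|_M$, and then producing a stable splitting of $M$ from the exact sequences defining the $L_{\zeta_i}$, is also Carlson's. You have correctly flagged the genuinely delicate point: passing from a decomposition in $\stmod(kG)$ to an honest direct-sum decomposition of $M$ requires Krull--Schmidt plus the projectivity criterion $|G|_N = \{0\} \Leftrightarrow N$ projective, and one must check that neither summand collapses to a projective. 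Your sketch is accurate; just be aware that the ``Mayer--Vietoris style distinguished triangle'' you allude to is, concretely, obtained from the pushout/pullback of the two sequences $0 \to L_{\zeta_i} \to \Omega^n(k) \to k \to 0$ along the fact that $L_{\zeta_1+\zeta_2}\otimes M$ is projective, and making that precise is where most of the work in \cite{Ca2} lies.
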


Following the development of the theory of support varieties for finite groups,
various mathematicians considered the generalization of the theory to other
``group-like" structures as mentioned in the introduction.  

Definition \ref{defn:cohsupp} can be repeated verbatim for an arbitrary finite group scheme.
More interesting, a ``representation theoretic model" for $|G|_M$ has been developed for
any finite group scheme.  This began with the model $\cN_p(\fg)_M$ of  $|G|_M$  in terms of the 
$p$-nilptent cone $\cN_p(\fg)$  for a finite dimensional $U^{[p]}(\fg)$-module $M$, where $\fg$ is  an
an arbitrary finite dimensional $p$-restricted Lie $\fg$. This was extended to the model $V(G_{(r)})_M$ 
in terms of infinitesimal 1-parameter subgroups of $G$ for any  infinitesimal group scheme
$G_{(r)}$ in the work of  A. Suslin, C. Bendel, and the author.  Finally, in the work of
the author and J. Pevtsova, a model $\Pi(G)$ isomorphic to $|G|_M$ was formulated in
terms of equivalence classes of $\pi$-points.    (See Definition \ref{defn:pi-point} in
the next lecture.)

These geometric models for $|G|_M$ play an important role in proving the following properties 
of support varieties for these various ``group-like" structures.

\begin{thm}
\label{thm:coh-supports}
Let $G$ be a finite group scheme over $k$, and let $M, N$ be finite dimensional $G$-modules.
\begin{enumerate}
\item
$|G|_M = 0$ if and only if $M$ is a projective $G$-module if and only if $M$ is an injective $G$-module.
\item
$|G|_{M\oplus N} \ = \ |G|_M \cup |G|_N$.
\item
$|G|_{M\otimes N} \ = \ |G|_M \cap |G|_N$.
\item
For any short exact sequence $0 \to M_1 \to M_2 \to M_3 \to 0$ and any permutation $\sigma$ of $\{ 1,2,3 \}$,
$|G|_{\sigma(1)} \ \subset \ |G|_{\sigma(2)} \cup |G|_{\sigma(3)}$.
\end{enumerate}
\end{thm}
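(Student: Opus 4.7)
The plan is to handle parts (1) and (2) directly from the cohomological definition, then to pass to the representation-theoretic model (via the $\pi$-point support $\Pi(G)_M$, or for infinitesimal $G$ the $1$-parameter subgroup model $V(G_{(r)})_M$) for parts (3) and (4), where a purely cohomological treatment is considerably more awkward.

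For part (2), I would start from the direct-sum decomposition
$$Ext_G^*(M\oplus N, M\oplus N) \ \simeq \ Ext_G^*(M,M)\oplus Ext_G^*(M,N)\oplus Ext_G^*(N,M)\oplus Ext_G^*(N,N),$$
and observe that the $H^\bu(G,k)$-action on each off-diagonal summand factors through the action on $Ext_G^*(M,M)$ (via composition with $\zeta\cdot \id_M$) and likewise through $Ext_G^*(N,N)$. Thus $I(M\oplus N) = I(M)\cap I(N)$, and passing to $\Spec$ converts this intersection of ideals into the required union of closed subvarieties. For part (1), if $M$ is projective then $Ext_G^{>0}(M,M) = 0$, so the irrelevant ideal $\m = H^{>0}(G,k)$ is contained in $I(M)$ and $|G|_M$ collapses to the origin. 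Conversely, $|G|_M = 0$ forces $\m^n \subset I(M)$ for some $n$, so $\m^n$ annihilates $Ext_G^*(M,M)$; by the Friedlander--Suslin theorem asserting that $Ext_G^*(M,M)$ is finitely generated over the finitely generated algebra $H^*(G,k)$, this forces $Ext_G^*(M,M)$ to be finite-dimensional over $k$, hence $Ext_G^i(M,M) = 0$ for $i\gg 0$. Self-injectivity of the finite-dimensional Hopf algebra $kG$ then implies $M$ is projective (equivalently, injective).

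For parts (3) and (4), I would invoke the identification of $|G|_M$ with $\Pi(G)_M$, so that a point of the support corresponds to the equivalence class of a $\pi$-point $\alpha\colon K[t]/t^p \to KG_K$ for which $\alpha^*(M_K)$ is not free over $K[t]/t^p$. Since the Hopf structure gives $\alpha^*(M\otimes N)\simeq \alpha^*(M)\otimes\alpha^*(N)$ as $K[t]/t^p$-modules, part (3) reduces to the key lemma that a tensor product of finite-dimensional $K[t]/t^p$-modules is free if and only if at least one tensor factor is free. One direction is immediate (freeness of $M$ makes $M\otimes N$ a sum of copies of $K[t]/t^p$); the other is a characteristic-$p$ calculation verifying that the indecomposable summand $J_i\otimes J_j$ (for $J_\ell = K[t]/t^\ell$ and $i,j<p$) always contains a Jordan block of size strictly less than $p$. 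For part (4), applying $\alpha^*$ to the given short exact sequence produces a short exact sequence over $K[t]/t^p$, and a two-out-of-three argument concludes: freeness of $\alpha^*M_1$ makes it injective and splits the sequence, while freeness of $\alpha^*M_3$ splits the sequence directly, so in either case the third pullback is a summand of the other two and hence free.

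The main obstacle is part (3). The tensor-product lemma over $K[t]/t^p$ is a genuine positive-characteristic phenomenon (the rational Clebsch--Gordan formula is not enough when $i+j-1>p$), and its passage to the support variety relies on the full identification $|G|_M\simeq \Pi(G)_M$ due to Friedlander--Pevtsova. Once that identification, the Friedlander--Suslin finite-generation theorem, and the indecomposable decomposition of tensor products over $K[t]/t^p$ are in hand, the four statements of Theorem \ref{thm:coh-supports} follow from the bookkeeping sketched above.
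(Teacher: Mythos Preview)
The paper does not prove this theorem; it is stated as a known result, with only the remark that the geometric models of $|G|_M$ ``play an important role'' in establishing these properties. Your overall strategy---treating (1) and (2) cohomologically and passing to the $\pi$-point model $\Pi(G)_M$ for (3) and (4)---is indeed the standard route in the literature the paper cites.

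There is, however, a genuine gap in your argument for (3). You assert that ``the Hopf structure gives $\alpha^*(M\otimes N)\simeq \alpha^*(M)\otimes\alpha^*(N)$,'' but a $\pi$-point $\alpha_K\colon K[t]/t^p \to KG_K$ is only a flat map of \emph{algebras}; the action of $t$ on $(M\otimes N)_K$ is through $\Delta_G(\alpha_K(t))$, which need not equal $\alpha_K(t)\otimes 1 + 1\otimes \alpha_K(t)$. A concrete instance is $G=\bG_{a(r)}$ with $r\geq 2$ and the $\pi$-point $\epsilon_r\colon t\mapsto u_{r-1}$ (explicitly noted in the paper to be a Hopf algebra map only for $r=1$): here $\Delta(u_{r-1})$ contains extra terms involving $u_0,\dots,u_{r-2}$, and one can write down $\bG_{a(2)}$-modules $M,N$ for which $\epsilon_2^*(M\otimes N)$ and $\epsilon_2^*(M)\otimes\epsilon_2^*(N)$ have different Jordan types. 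Factoring $\alpha_K$ through a unipotent abelian $C_K$ only makes the second leg $KC_K\to KG_K$ a Hopf map; the first leg $K[t]/t^p\to KC_K$ need not be. The conclusion you want---that $\alpha_K^*(M\otimes N)$ is free iff one of $\alpha_K^*(M)$, $\alpha_K^*(N)$ is---is true, but its proof in \cite{FPv1}, \cite{FPv2} is more delicate than a direct appeal to the $K[t]/t^p$ tensor lemma.

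A smaller gap in (1): from $Ext_G^i(M,M)=0$ for $i\gg 0$, self-injectivity alone does not yield projectivity of $M$. What is needed is $Ext_G^i(M,S)=0$ for $i\gg 0$ and every simple $S$, giving finite projective dimension and hence projectivity over a self-injective algebra. This does follow from $|G|_M=0$ by the same mechanism---$I(M)$ annihilates each $Ext_G^*(M,S)$ because the $H^\bu(G,k)$-action on $Ext_G^*(M,S)$ factors through $Ext_G^*(M,M)$---but the step should be made explicit.
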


The theory of support varieties has not only given information about the representation theory of $G$
but also has led to new classes of modules.  We mention the ``modules of constant Jordan type" 
introduced by J. Carlson, J. Pevtsova, and the author \cite{CFP} based on the ``well-definedness of
maximal Jordan type" established by J Pevtsova, A. Suslin, and the author \cite{FPS}.  We point out
the paper of J. Calrson, Z. Lin, and D. Nakano \cite{CLN} which gives an interesting relationship
between the cohomological support variety for $G(\bF_p)$ and for $G_{(1)}$ for  finite dimensional $G$-modules
with $G$ equal to some simple algebraic group.

\begin{remark}
If $G$ is a linear algebraic group, then we face the following daunting problems in adopting the 
techniques of cohomological support varieties to the representation theory of linear algebraic groups. 
\begin{itemize}
\item  If $G$ is a simple algebraic group, then $H^i(G,k)$ vanishes in positive dimensions by the vanishing
theorem of G. Kempf \cite{Kem}.
\item  On the other hand, if $U$ is a non-trivial linear algebraic group which is unipotent,
then $H^\bu(G,k)$ is not finitely generated.
\item  We are unaware of a result which can play the role of  Quillen's detection theorem stating that cohomology of
a finite group is detected modulo nilpotents on elementary subgroups (see Theorem \ref{thm:quillen}).
\end{itemize}
\end{remark}

In recent work, the author has explored unipotent algebraic groups with the view that, unlike simple algebraic
groups, these should have ``enough cohomology".   Unfortunately, this appear not to be the case even for 
the Heisenberg group $U_3 \subset GL_3$ of upper triangular elements.    
%The following result of \cite{F4}
%can be seen as discouraging the effort to provide the representation theory of unipotent algebraic groups 
%with a good theory of cohomological support varieties.
%
%Namely, Proposition \ref{prop:unipotent} suggests that actions of $H^*(U,k)$ on $Ext_U^*(M,M)$ give rise
%to certain geometric invariants which depend only upon the abelianization $U^{ab}$ of a unipotent linear 
%algebraic group $U$ (in contrast to invariants for Frobenius kernels $U_{(r)}$).
%
%\begin{prop}
%\label{prop:unipotent}
%Let $U$ be a term in the descending central series of the unipotent radical of a parabolic subgroup
%of a simple algebraic group $G$ (with some mild hypotheses on $p$ depending upon $G$).   Let
%$U \to U^{ab}$ denote the abelianization of $U$.   Then each of the following maps have the same image:
%\begin{enumerate}
%\item 
%$\varprojlim_{s\geq r} H^*(U_{(s)},k)_{red} \ \to \ H^*(U_{(r)},k)_{red}$
%\item 
%$H^*(U,k) \ \to \ H^*(U_{(r)},k)_{red}$
%\item
%$H^*((U^{ab})_{(r)},k) \ \to \ H^*(U_{(r)},k)_{red}$.
%\end{enumerate} 
%Here, $(-)_{red}$ is the functor sending a commutative algebra $A$ to the quotient $A/nil(A)$
%of $A$ by its nilradical.
%\end{prop}
%
%In the draft \cite{F5} (discussed in the oral version of Lecture III), 
%
A tentative framework has been developed by the author
in which a cohomological support theory $M \mapsto V^{coh}(G)_M$ is formulated using
a continuous approximation of the rational 
cohomology $H^\bu(G,k)$ of a linear algebraic group $G$.   This naturally maps to the support theory
$M \mapsto V(G)_M$ discussed in Lecture IV provided that $G$ is of ``exponential type".  For $G = \bG_a$,
this map is an isomorphism for all finite dimensional $\bG_a$-modules.  However, even for the Heisenberg
group $U_3$, the two theories are quite different.  For example, the image of $V^{coh}(G)_M \to V(G)_M$
is contained in the $G$-invariants of $V(G)$.

\vskip 1in

%%%%%%%%%%%%%%%%%%%%%%%%%%

\section{Lecture IV: Support varieties for linear algebraic groups}

In his final lecture, the author presented his construction $M \ \mapsto  \ V(G)_M$ of support varieties
for $G$-modules $M$, where $G$ is a linear algebraic group ``of exponential type".  The beginnings
of this theory can be found in \cite{F1} and some applications in \cite{F3}.   The theory
succeeds in that the support varieties defined here extend those for infinitesimal kernels,
have many of the expected properties (see Theorem \ref{thm:G-items}), and are formulated 
intrinsically for those linear
algebraic groups for which the theory applies.  One interesting aspect of this theory
is that it leads to new and apparently interesting classes of (infinite-dimensional) 
$G$-modules.  

 One failure of the theory we present is that there are $G$-modules
$M$ which are not injective but for which $V(G)_M = 0$.
We hope that this theory will be refined, perhaps using ``formal 1-parameter subgroups"
mentioned at the end of this lecture. 

As for the first three lectures, we begin by providing the outline of this fourth lecture
given to participants.

\vskip .1in
\noindent
IV.A  1-parameter subgroups.

        i.)    Group homomorphisms $\bG_a \to G$;
        
        ii.)   Examples of $\bG_a$ and $GL_N$;
        
        iii.)  Springer isomorphisms and groups of exponential type;
        
        iv.)   SFB for $G_{(r)}$.

\vskip .1in
\noindent
IV.B  Linear algebraic groups of exponential type.

        i.)    Definitions; Sobaje's theorem;
        
        ii.)   $p$-nilpotent operator $\alpha_{\ul B}$;
        
        iii.)  Jordan types.

\vskip .1in
\noindent
IV.C  Support varieties.

        i.)    $V(G), V(G)_M$;
        
        ii.)   Example of $\bG_a$;
        
        iii.)  Properties.

\vskip .1in
\noindent
IV.D   Special modules.

        i.)    Mock injective modules;
        
        ii.)   Mock trivial modules;
        
        iii.)  Modules for realization of subspaces of $V(G)$.

\vskip .1in
\noindent
IV.E   Some open problems.

        i.)    Formal 1-parameter subgroups and injectivity;
        
        ii.)   Finite generation of cohomology of sub-coalgebras;
        
        iii.)  Detecting rational cohomology modulo nilpotents.

\vskip .1in
\noindent
Topics for discussion/projects:

\vskip .1in
\noindent
IV.A   Work through the exponential map for $GL_N$; work through
       some details of proofs found in \cite{SFB1}, \cite{SFB2}.
\vskip .1in
\noindent
IV.B   Investigate 1-parameter groups for $Sp_{2n}$.
\vskip .1in
\noindent
IV.C   Work out examples for $\bG_a$, for induced modules, for homogeneous
            varieties.
\vskip .1in
\noindent
IV.D  Investigate the question of what $C \subset V(G)$ can be realized as 
$V(G)_M$ for some (possibly infinite dimensional) $G$-module $M$.

\vskip .2in

\subsection{1-parameter subgroups}

In this subsection, we discuss 1-parameter subgroups of linear algebraic groups.
 These 1-parameter subgroups  might more formally be called {\it unipotent} 1-parameter 
 subgroups.  After giving the definition and some examples, we give the definition
 of a linear algebraic group of exponential type.  For such a group $G$, the set
 $V(G)$ of 1-parameter subgroups is the set of $k$-points of an ind-scheme 
 $\cC_{\infty}(\cN_p(\fg))$ defined in terms of the restricted Lie algebra $\fg$ of $G$.
 As we mention, most of the familiar linear algebraic groups are groups of exponential type.
 
 We begin by recalling from \cite{SFB1} the affine scheme $V_r(G)$ of height $r$ infinitesimal 1-parameter
 subgroups of an affine group scheme $G$ over $k$.
 
 \begin{defn}
 \label{defn:1par-scheme}
 Let $G$ be an affine group scheme over $k$ and $r$ a positive integer.  Then the functor sending
 a commutative $k$-algebras $A$ to the set of morphisms (over $\Spec A$) of group schemes 
 of the form $\bG_{a(r),A} \to G_A$ is representable by an affine group scheme $V_r(G)$.
 Here, $G_A$ is the base change $G \times_{\Spec k} \Spec A$ of $G$.
 
 In particular, $V_r(G)(k)$ is the set of height $r$  infinitesimal 1-parameter subgroups 
 $\mu: \bG_{a(r)} \to G$.
 \end{defn}
 
For any affine group scheme $G$ over $k$,  $V_r(G) = V_r(G_{(r)})$.

\begin{defn}
\label{defn:1par}
Let $G$ be a linear algebraic group over $k$.  Then a 1-parameter subgroup
is a morphism of group schemes over $k$ of the form $\psi: \bG_a \to G$.  We 
denote by $V(G)$ the set of 1-parameter subgroups of $G$.  

Restriction to $G_{(r)}$ determines a natural map $V(G) \ \to (V_r(G))(k) = (V_r(G_{(r)})(k)$
from $V(G)$ to the set of infinitesimal 1-parameter subgroups $\bG_{a(r)} \to G$,
the set of  $k$-points of the affine scheme $V(G_{(r)})$ of Definition \ref{defn:1par-scheme}.
\end{defn}

\begin{ex}
\begin{enumerate}
\item
Take $G = \bG_a$.  A 1-parameter subgroup $\bG_a \to \bG_a$ is determined by a map of 
coordinate algebras $k[T] \ \leftarrow \ k[T]$ given by sending $T$ to an additive polynomial; namely
a polynomial of the form $ \sum_{i \geq 0} a_iT^{p^i}$.  (The condition that the map 
$k[T] \ \leftarrow \ k[T]$ sending $T$ to $p(T)$ is a map of
Hopf algebras is equivalent to the condition that $p(T)$ be of this form.)  Thus, $V(\bG_a)$ is the set of $k$-points
of the affine ind-scheme $\bA^\infty$, the set of all sequences $\ul a = (a_0,a_1,\ldots,a_n,\dots )$
with the property that $a_N = 0$ for $N$ sufficiently large (i.e., ``finite sequences").
\item
Take $G = GL_N$.  Then a 1-parameter subgroup $\psi: \bG_a \to GL_N$ has associated map
on coordinate algebras $k[\{ X_{i,j}\},det^{-1}] \to k[T]$ which must be compatible with the
coproducts $\Delta_{GL_N}$ and $\Delta_{\bG_a}$.   As shown in \cite {SFB1}, such a 1-parameter subgroup 
corresponds to a a finite sequence $\ul A = (A_0,A_1,\ldots,A_N, \ldots)$ of $p$-nilpotent $N\times n$ matrices
(i.e., $p$-nilpotent elements of $\gl_N$) which pair-wise commute.  To such a finite sequence $\ul A$, the associated
1-parameter subgroup is the morphism of algebraic groups
$$\prod_{i \geq 0} exp_{A_i} \circ F^i: \bG_a \to \bG_a \to GL_N, \quad r \in R \mapsto \prod_{i\geq 0} exp_{A_i}(r^{p^i})
\in GL_N(R),$$
where 
$$exp_A(s) = \ 1 + s\cdot A + (s^2/2) \cdot A^2 + \cdots + (s^{p-1}/(p-1)! )\cdot A^{p-1}.$$
Thus, $V(GL_N)$ is the set of affine $k$ points of the ind-scheme $\cC_\infty(\cN_p(\gl_N)) \ = \ 
\varinjlim_r \cC_r(\cN_p(\gl_N))$, where $\cC_r(\cN_p(\gl_N)) \simeq V_r(GL_N)$ represents the functor of $r$-tuples
of $p$-nilpotent, pair-wise commuting $N\times N$ matrices.
\end{enumerate}
\end{ex}
\vskip .1in

\begin{defn}
Let $\fg$ be a finite dimensional restricted Lie algebra over $k$.  Denote by $\cN_p(\fg)$
the subvariety of $\fg$ (viewed as an affine space) consisting of $X\in \fg$ with $X^{[p]} = 0$.
We define the affine $k$-scheme $\cC_r(\cN_p(\fg))$ to be the subvariety of $(\cN_p(\fg))^{\times r}$
consisting of $r$-tuples $(B_0,\ldots,B_r)$ satisfying 
$$[B_i,B_j] \ = \ B_i^{[p]} \ = \ B_j^{[p]} \ = \ 0, \ 0 \leq i,j \leq r.$$
We define $\cC_\infty(\cN_p(\fg))$ to be the ind-scheme  $\varinjlim \cC_r(\cN_p(\fg))$.
\end{defn}

Our construction of support varieties only applies to a linear algebraic group $G$ which
is of {\bf exponential type}.   This condition is the condition that $V(G)$ can be 
naturally identified with the 
set of $k$ points of $\cC_\infty(\cN_p(\fg))$ as is the case for $G = GL_N$.
The following definition of \cite{F2} is an extension of the concept in \cite{SFB1} 
of an embedding $G \subset GL_N$ of exponential type.

\begin{defn}
\label{defn:exptype}
Let $G$ be a linear algebraic group over $k$ with Lie algebra $\fg$.   A structure of exponential type
on $G$ is a morphism of $k$-schemes
\begin{equation}
\label{Exp}
\cE: \cN_p(\fg) \times \bG_a \ \to G, \quad (B,s) \mapsto \cE_B(s)
\end{equation}
such that
\begin{enumerate}
\item
For each $B\in \cN_p(\fg)(k)$, $\cE_B: \bG_a \to G$ is a 1-parameter subgroup.
\item
For any pair of  commuting $p$-nilpotent elements $B, B^\prime \in \fg$,
the maps $\cE_B, \cE_{B^\prime}: \bG_a \to G$ commute.
\item
For any commutative $k$-algebra $A$, any $\alpha \in A$,  and any 
$s \in \bG_a(A)$, \ $\cE_{\alpha \cdot B}(s) = \cE_B(\alpha\cdot s)$.
\item  Every 1-parameter subgroup $\psi: \bG_a \to G$ is of the form 
$$ \cE_{\ul B} \ \equiv \ \prod_{s=0}^{r-1} (\cE_{B_s} \circ F^s)$$
for some $r > 0$, some $\ul B \in \cC_r(\cN_p(\fg))$; furthermore, $\cC_r(\cN_p(\fg)) \to
V_r(G), \ \ul B \mapsto \cE_{\ul B} \circ i_r$ is an isomorphism for each $r > 0$.
\end{enumerate}

A linear algebraic group over $k$ which admits a structure of exponential type is
said to be a {\bf linear algebraic group of exponential type}.

Moreover,  a closed subgroup $H \subset G$ is said to be an embedding of {\it exponential type} if
$H$ is equipped with the structure of exponential type given by restricting that
provided to $G$; in particular, we require $\cE: \cN_p(\fg) \times \bG_a \ \to \ G$
to restrict to $\cE: \cN_p(\fh) \times \bG_a \ \to \ H$.
\end{defn}

Up to isomorphism, if such a structure exists then it is unique.

\begin{ex} There are many examples of linear algebraic groups of exponential type.
\begin{enumerate}
\item
Any classical simple linear algebraic group $G$ over $k$ (i.e., of type $A, B, C$ or $D$) and 
the unipotent radical of any parabolic subgroup defined of such a group $G$ over $\bF_p$ as remarked in 
\cite{SFB1}.
\item
Any simple linear algebraic group $G$ provided that $p$ is separably good for $G$ (see \cite{S3}).
\item
Any term of the lower central series of the unipotent radical of a parabolic subgroup defined
over $\bF_p$ of a simple algebraic group $G$, provided $p$ is separably good for $G$ (see \cite{Sei} plus \cite{S3}).
\end{enumerate}
\end{ex}

\subsection{$p$-nilpotent operators}

We begin this subsection by briefly recalling the theory of $\pi$-points for finite group schemes
developed by J. Pevtsova and the author.

If $G$ is a linear algebraic group over $k$ and $M$ a rational $G_{(r)}$-module, then the 
geometric formulation $V_r(G_{(r)})_M$ of $|G_{(r)}|_M$ is obtained by associating to every point of $V_r(M)$ a 
$p$-nilpotent operator on $M$.   In the following definition, we use field extensions $K/k$ to capture
the scheme structure of $V_r(G_{(r)})_M$.

\begin{defn}
\label{defn:pi-point}
Let $G$ be a finite group scheme with group algebra $kG$ (the $k$-linear dual to the coordinate 
algebra $k[G]$).  Then a $\pi$-point is a left flat $K$-linear map of algebras $\alpha_K: K[t]/T^p \to KG$ 
for some field extension $K/k$
with the property that $\alpha_K$ factors through $KC_K \to KG$ for some unipotent subgroup scheme
$C_K \subset G_K$. 

For a suitable equivalence relation on $\pi$-points, the set of equivalence classes of $\pi$-points of $G$ is 
naturally identified with the set of non-tautological homogeneous prime ideals of $H^\bu(G,k)$.  Indeed, 
one can put a scheme structure $\Pi(G)$ on equivalence
classes of $\pi$-points which is
formulated in terms of the category of  $G$-modules (and not using homological algebra) so that
$\Pi(G)$ is isomorphic to $\Proj H^\bu(G,k)$ as a $k$-scheme \cite{FPv2}.

For any  $G$-module $M$, the ``local action" on $M$ at the $\pi$-point $\alpha: K[t]/T^p \to KG$ 
is the action of $\alpha_{K*}(T)$ on $M_K \equiv M\otimes K$ (equivalently, the action of $T \in k[T]/T^p$ on $\alpha_K^*(M_K)$).

For any $G$-module $M$, the ``$\Pi$-support variety" $\Pi(G)_M$ of $M$ consists of those 
equivalence classes of $\pi$-points $\alpha: K[t]/T^p \to KG$ for which $\alpha^*(M_K)$ is {\it not} free
as a $K[T]/T^p$-module.
\end{defn}

The fact that $\Pi(G)_M$ is well defined (that the condition that an equivalence of class of $\pi$-points
can be tested on any representative of that equivalence class) was justified by the work of J. Pevtsova,
and the author in \cite{FPv1}.

The following definition of ``local action" of $G_{(r)}$ on $M$ at an infinitesimal 1-parameter
subgroup is implicit in \cite{SFB2}.

\begin{defn}
\label{defn:local-Frob}
Let $G$ be a linear algebraic group of exponential type, let $\ul B = (B_0,\ldots,B_{r-1})$ be a $k$-point of
$\cC_r(\cN_p(\fg))$, and let $M$ a $G_{(r)}$-module.   Then the local action of $G_{(r)}$
on $M$ at $\cE_{\ul B}$ is defined to be the local action at the $\pi$-point 
$\mu_{\ul B} \equiv \cE_{\ul B*} \circ \epsilon_r: k[T] \to k\bG_{a(r)} \to kG_{(r)}$
sending $T$ to $\cE_{\ul B*}(u_{r-1})$.   (The map $\epsilon_r: k[T]/T^p \to k\bG_{a(r)} = 
k[u_0,\ldots,u_{r-1}]/(\{ u_i^p \})$ is the map of $k$-algebras sending $T$ to $u_{r-1}$; 
this is a Hopf algebra map if and only if $r=1$.) 

Consequently, 
$$V_r(G)_M = V(G_{(r)}) \ \simeq \ \{ \cE_{\ul B} \in V_r(G): \mu_{\ul B}^*(M) \text{\ is not free} \}.$$
\end{defn}

After much experimentation, the author introduced in \cite{F1} the following
definition of the local action at a 1-parameter subgroup $\cE_{\ul B}$ of a linear algebraic group of 
exponential type $G$ acting on a $G$-module $M$.  
This definition is not formulated in terms of  $\cE_{\ul B}^*(M)$.  The justification of 
the somewhat confusing ``twist" (i.e., a reordering of $\ul B = (B_0,\ldots,B_r\ldots)$ is 
implicit in Proposition \ref{prop:comparepi}, which shows that the restriction to Frobenius kernels
of this definition gives a ``functionally equivalent" formulation of ``local action" as that given in 
Definition \ref{defn:local-Frob}.

\begin{defn}
\label{defn:localaction}
Let $G$ be a linear algebraic group of exponential type, equipped with an exponentiation
$\cE: \cN_p \times \bG_a \to G$.  Let $M$ be a rational $G$-module and $\ul B = (B_0,B_1,\ldots,B_n,\ldots)
\in \cC_\infty(\cN_p(\fg))$ be a finite sequence.  Then the action of $G$ on $M$ at  
$\cE_{\ul B}: \bG_a \to G \in V(G)$ is defined to be the action of
\begin{equation}
\label{eqn:localaction}
\sum_{s \geq 0} (\cE_{B_s})_*(u_s) \ = \ \sum_{s \geq 0} (\cE_{B_s} \circ F_s)_*(u_0).
\end{equation}
\end{defn}

One checks that this action is in fact $p$-nilpotent, thereby defining 
\begin{equation}
\label{eqn:alphaB}
\alpha_{\ul B}: k[u]/u^p \to \ kG, \quad \ul B \in \cC_\infty(\cN_p(\fg))); \quad u \mapsto 
\sum_{s \geq 0} (\cE_{B_s})_*(u_s).
\end{equation}

The close connection of Definition \ref{defn:localaction} and the theory of $\pi$-points
briefly summarized in Definition \ref{defn:pi-point} is given by the following result of \cite{F2}
based upon an argument of P. Sobaje \cite{S1}. 

\begin{prop} \cite[4.3]{F2}
\label{prop:comparepi}
Let $G$ be a linear algebraic group of exponential type, equipped with an exponentiation
$\cE: \cN_p \times \bG_a \to G$.  For any $r > 0$ and any $\ul B \in \cC_r(\cN_p(\fg))$, 
the $\pi$-points of $G_{(r)}$
$$\mu_{\ul B} = \cE_{\ul B} \circ \epsilon_r : k[T]/T^p \ \to k\bG_{a(r)} \ \to\ kG_{(r)}, \quad 
\alpha_{\Lambda_r(\ul B)}: k[u]/u^p \to kG_{(r)}$$
are equivalent, where $\Lambda_r(B_0,\ldots,B_{r-1}) = (B_{r-1},\ldots,B_0)$.
\end{prop}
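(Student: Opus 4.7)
The plan is to expand $\mu_{\ul B}(T) = \cE_{\ul B *}(u_{r-1})$ as an explicit sum in $kG_{(r)}$, recognize $\alpha_{\Lambda_r(\ul B)}(u) = \sum_{s=0}^{r-1}(\cE_{B_{r-1-s}})_{*}(u_s)$ as the ``single-coordinate'' portion of that sum, show that the remaining ``cross-term'' contributions all lie in the square of the augmentation ideal of a common commutative sub-Hopf algebra, and then invoke a standard cotangent-space criterion on $\pi$-points to force equivalence.

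For the explicit expansion, I would factor $\cE_{\ul B}$ as $\Phi \circ \delta$, where $\delta : \bG_a \to \bG_a^{\times r}$ is the Frobenius diagonal $t \mapsto (t, t^p, \ldots, t^{p^{r-1}})$ and $\Phi : \bG_a^{\times r} \to G$ is $(t_0, \ldots, t_{r-1}) \mapsto \prod_{s=0}^{r-1} \cE_{B_s}(t_s)$; the latter is a group homomorphism because the $B_s$ pairwise commute, by condition (2) of Definition \ref{defn:exptype}. Passing to distribution algebras and using the iterated coproduct
\[
\Delta^{(r)}(u_{r-1}) \;=\; \Delta^{(r)}(\gamma_{p^{r-1}}) \;=\; \sum_{i_0 + \cdots + i_{r-1} = p^{r-1}} \gamma_{i_0} \otimes \cdots \otimes \gamma_{i_{r-1}}
\]
in $k\bG_a$, together with the identity $F^s_*(\gamma_n) = \gamma_{n/p^s}$ if $p^s \mid n$ and zero otherwise, the convolution formula for $\delta_*$ yields
\[
\cE_{\ul B *}(u_{r-1}) \;=\; \sum_{\substack{(j_0,\ldots,j_{r-1}) \\ \sum_s p^s j_s \,=\, p^{r-1}}} \prod_{s=0}^{r-1} (\cE_{B_s})_*(\gamma_{j_s}) \quad \in \ kG_{(r)}.
\]

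The next step is to split this sum by the support of $(j_0, \ldots, j_{r-1})$. The single-coordinate solutions, in which exactly one $j_{s_0} = p^{r-1-s_0}$ is nonzero, contribute $(\cE_{B_{s_0}})_*(u_{r-1-s_0})$, and summing over $s_0$ and reindexing $s = r-1-s_0$ recovers precisely $\alpha_{\Lambda_r(\ul B)}(u) = \sum_{s=0}^{r-1}(\cE_{B_{r-1-s}})_*(u_s)$. Every remaining solution has at least two nonzero entries, so the corresponding product contains at least two factors of the form $(\cE_{B_s})_*(\gamma_{j_s})$ with $j_s \geq 1$; each such factor lies in the augmentation ideal $I_A$ of the commutative subalgebra $A \subset kG_{(r)}$ generated by $\bigcup_s \mathrm{Im}((\cE_{B_s})_*)$ (commutativity comes from the pairwise commutation of the $B_s$). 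Hence $\mu_{\ul B}(T) - \alpha_{\Lambda_r(\ul B)}(u) \in I_A^2$.

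To conclude, one invokes the following standard criterion in the theory of $\pi$-points: if $\alpha, \beta : K[T]/T^p \to KG_{(r)}$ both factor through a common commutative unipotent sub-Hopf algebra $A_K$ and satisfy $\alpha(T) - \beta(T) \in I_{A_K}^2$, then $\alpha$ and $\beta$ are equivalent $\pi$-points; the reason is that the class of such a $\pi$-point in $\Pi(A) \subset \Pi(G_{(r)})$ is determined by the image of $\alpha(T)$ in the cotangent space $I_A/I_A^2$, which dually controls the map induced on the reduced cohomology $H^\bu(A,k)_{\mathrm{red}}$. The principal obstacle is making this last step precise and uniform across $KG_{(r)}$-modules—this is exactly where the argument of Sobaje \cite{S1} intervenes, producing (for instance by multiplication of $T$ by a suitable unit in $1 + I_{A_K}$) an explicit isomorphism between the $K[T]/T^p$-module structures on any $KG_{(r)}$-module determined by $\alpha$ and by $\beta$. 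The combinatorial computation of the coproduct in the second step is routine once one works carefully with divided powers in $k\bG_a$.
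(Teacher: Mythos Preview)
The paper does not actually prove this proposition; it is quoted from \cite[4.3]{F2} with the remark that the proof rests on an argument of Sobaje \cite{S1}.  Your sketch is exactly the argument carried out there: factor $\cE_{\ul B}$ as $\Phi\circ\delta$, expand $(\cE_{\ul B})_*(u_{r-1})$ via the divided-power coproduct in $k\bG_a$, recognise the single-index summands as $\alpha_{\Lambda_r(\ul B)}(u)$, and push the multi-index cross-terms into the square of the augmentation ideal of a common commutative unipotent sub-Hopf algebra, after which the $I_A/I_A^2$-criterion for equivalence of $\pi$-points finishes the job.  So there is nothing to compare against in the present text, and your approach agrees with the cited one.

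One point worth tightening: for the final step you need $A$ to be a genuine sub-\emph{Hopf} algebra of $kG_{(r)}$ (equivalently, the group algebra of a closed unipotent abelian subgroup scheme), not merely the associative subalgebra generated by the sets $\mathrm{Im}((\cE_{B_s})_*)$.  The clean choice is the group algebra of the image of your homomorphism $\Phi$ restricted to an appropriate Frobenius kernel of $\bG_a^{\times r}$; since $\Phi$ is a group homomorphism this image is a closed subgroup scheme of $G_{(r)}$, its group algebra is commutative and local, and both $\mu_{\ul B}$ and $\alpha_{\Lambda_r(\ul B)}$ visibly factor through it.  With that adjustment your $I_A^2$ argument and the appeal to the cotangent-space criterion go through as stated.
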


This equivalence of $\pi$-points enables a comparison of support varieties for finite
group schemes and the definition we now give of support varieties for linear algebraic groups 
of exponential type.  Indeed, it enables a comparison of the ``generalized support varieties"
introduced by J. Pevtsova and the author in \cite{FPv3} using the local data of the full
Jordan type of a $k[u]/u^p$-module rather than merely whether or not such a module is free.

\vskip .2in

\subsection{ The support variety $V(G)_M$}

Much of this subsection is copied from the author's paper \cite{F2}.
After giving the definition of the support variety $V(G)_M$ of a rational $G$-module $M$ of a linear 
algebraic group of exponential type, we review many of the properties of this construction.
The first property of Theorem \ref{thm:G-items} tells us that $V(G)_M$ can be
recovered from $V(G_{(r)})_M$ for $r >> 0$ provided that $M$ is finite dimensional.
On the other hand, for $M$ infinite dimensional the support variety $V(G)_M$ provides 
information about $M$ not detected by any Frobenius kernel.

\begin{defn}
\label{supportG}
Let $G$ be a linear algebraic group equipped with a structure of  exponential type and let
$M$ be a  $G$-module. 
We define the {\bf support variety} of $M$ to be the subset $V(G)_M \ \subset \ V(G)$ consisting 
of those $\cE_{\ul B}$ such that $\alpha_{\ul B}^*(M)$  is not free as a $k[u]/u^p$-module, where 
$\alpha_{\ul B}: k[u]/u^p \to kG$ is defined in (\ref{eqn:alphaB}).

For a finite dimensional $G$-module $M$, we define the {\bf Jordan type} of $M$ at the 
1-parameter subgroup $\cE_{\ul B}$ to be
$$JT_{G,M}(\cE_{\ul B})\quad \equiv \quad JT(\sum_{s \geq 0} (\cE_{B_s})_*(u_s),M),$$
the Jordan type of the local action of $G$ on $M$ at $\cE_{\ul B}$ (see Definition \ref{defn:localaction}).
For such a  finite dimensional $G$-module $M$, 
$V(G)_M \ \subset \ V(G)$ consists of those 1-parameter subgroups $\cE_{\ul B}$ 
such that some block of the Jordan type of $M$ at $\cE_{\ul B}$ has size $< p$.
\end{defn}

The following definition is  closely
related to the formulation of $p$-nilpotent degree given in \cite[2.6]{F1}.

\begin{defn} (cf. \cite[2.6]{F1})
\label{expdegree}
Let $G$ be a linear algebraic group equipped with a structure of exponential type and let $M$ be a 
$G$-module.  Then $M$ is said to have  exponential degree $< p^r$
if $(\cE_B)_*(u_s)$ acts trivially on $M$ for all $s \geq r$, all $B \in \cN_p(\fg)$.
\end{defn}

As observed in \cite{F2}, every finite dimensional $G$-module $M$ has
exponential degree $< p^r$ for $r$ sufficiently large.

\begin{thm} \cite[4.6]{F2}
\label{thm:G-items}
Let $G$ be a linear algebraic group equipped with a structure of  exponential type and
$M$ a rational $G$-module
\begin{enumerate}
\item  
If $M$ has exponential degree $< p^r$, then 
$V(G)_M \ = \ \Lambda_r^{-1}(V_r(G)_M(k)))$. 
\item
If $M$ is finite dimensional, then $V(G)_M \subset V(G)$ is closed.
\item
$V(G)_{M\oplus N} = V(G)_M \cup V(G)_N.$
\item
$V(G)_{M\otimes N} = V(G)_M \cap V(G)_N.$
\item
If $0 \to M_1 \to M_2 \to M_3 \to 0$ is a short exact sequence
of rational $G$-modules, then the support variety $V(G)_{M_i}$ of one of the
$M_i$'s is contained in the union of the support varieties of the other two.
\item 
If $G$ admits an embedding $i: G \hookrightarrow GL_N$ of exponential type, then 
$$V(G)_{M^{(1)}} \ = \ \{ \cE_{(B_0,B_1,B_2\ldots)} \in V(G) :
 \cE_{(B_1^{(1)},B_2^{(1)},\ldots)} \in V(G)_M\} .$$
 (Here, $M^{(1)}$ is the Frobenius twist of $M$, as formulated in Definition \ref{Frobenius twist}.)
\item  For any $r >0$, the restriction of $M$ to $kG_{(r)}$ is injective (equivalently, projective) 
if and only if the intersection of $V(G)_M$ with the subset 
$\{ \psi_{\ul B}: B_s = 0, s > r \}$ inside $V(G)$ equals $\{ \cE_{\ul 0} \}$.
\item  $V(G)_M \ \subset \ V(G)$ is a $G(k)$-stable subset.
\end{enumerate}
\end{thm}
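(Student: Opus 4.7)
The $G(k)$-action on $V(G)$ is the conjugation action: for $g \in G(k)$ and a 1-parameter subgroup $\psi: \bG_a \to G$, one sets $g \cdot \psi := \operatorname{int}(g) \circ \psi$, where $\operatorname{int}(g): G \to G$ is conjugation by $g$. Thus, given $\cE_{\ul B} \in V(G)_M$, the plan is to show $\operatorname{int}(g) \circ \cE_{\ul B} \in V(G)_M$ for every $g \in G(k)$.

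The crucial first step is to identify $\operatorname{int}(g) \circ \cE_{\ul B}$ in the parametrization of $V(G)$ by $\cC_\infty(\cN_p(\fg))$. I claim that
$$\operatorname{int}(g) \circ \cE_{\ul B} \;=\; \cE_{\Ad(g) \cdot \ul B}, \qquad \Ad(g) \cdot \ul B \;:=\; (\Ad(g)(B_0),\, \Ad(g)(B_1),\, \ldots),$$
where $\Ad(g): \fg \to \fg$ is the usual adjoint action (which preserves both $\cN_p(\fg)$ and commutation). For a single $B$ this is the $G$-equivariance of $\cE$: it is immediate in $GL_N$ since $g \exp(tB) g^{-1} = \exp(t \Ad(g)(B))$, and for a general $G$ of exponential type it follows either from a compatible embedding $G \hookrightarrow GL_N$ of exponential type, or by applying the uniqueness clause of axiom (4) of Definition \ref{defn:exptype} at height $1$ (using $V_1(G) \simeq \cN_p(\fg)$ and the fact that the bijection $\cC_1(\cN_p(\fg)) \simeq V_1(G)$ intertwines the respective $G$-actions). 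The case of general $\ul B$ then follows by applying this to each factor of $\cE_{\ul B} = \prod_s \cE_{B_s} \circ F^s$ together with the naturality of Frobenius. The main obstacle of the argument is precisely this $G$-equivariance of $\cE$, which, while geometrically natural and valid for all standard exponential structures (Springer isomorphisms included), is not explicitly recorded among the axioms of Definition \ref{defn:exptype}.

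Once the claim is established, at the level of the group algebra one computes
$$\alpha_{\Ad(g) \cdot \ul B}(u) \;=\; \sum_{s \geq 0} (\operatorname{int}(g) \circ \cE_{B_s})_*(u_s) \;=\; g \cdot \alpha_{\ul B}(u) \cdot g^{-1} \;\in\; kG,$$
using that $\operatorname{int}(g)$ induces conjugation by $g$ on $kG$. Passing to the action on the $G$-module $M$, the endomorphism of $M$ given by $\alpha_{\Ad(g) \cdot \ul B}(u)$ equals $\rho_g \circ \alpha_{\ul B}(u) \circ \rho_g^{-1}$, where $\rho_g: M \to M$ is the $k$-linear automorphism given by the action of $g$. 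These two conjugate $p$-nilpotent operators on $M$ share a Jordan type, so $\alpha_{\Ad(g) \cdot \ul B}^*(M)$ is a free $k[u]/u^p$-module if and only if $\alpha_{\ul B}^*(M)$ is. Hence $\cE_{\Ad(g) \cdot \ul B} \in V(G)_M$ iff $\cE_{\ul B} \in V(G)_M$, and $V(G)_M$ is $G(k)$-stable.
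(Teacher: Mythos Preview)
The paper does not give a proof of this theorem; it is quoted from \cite[4.6]{F2}, with only Remark~\ref{rem:tw} commenting on item~(6). So there is no argument in the present text to compare yours against. That said, your proof of~(8) follows the natural and correct line: identify the conjugation action on $V(G)$ with the adjoint action on $\cC_\infty(\cN_p(\fg))$ via $G$-equivariance of $\cE$, deduce that $\alpha_{\Ad(g)\cdot\ul B}(u)$ and $\alpha_{\ul B}(u)$ act on $M$ by conjugate $p$-nilpotent operators, and conclude that the associated $k[u]/u^p$-module structures on $M$ are isomorphic (hence one is free if and only if the other is, which is the correct formulation even when $M$ is infinite dimensional).

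The one soft spot is precisely the one you flag, and your height-$1$ justification does not quite close it. Knowing that $\operatorname{int}(g)\circ\cE_B$ and $\cE_{\Ad(g)(B)}$ agree on $\bG_{a(1)}$ only pins down the first coordinate under the bijection of axiom~(4); it does not by itself exclude $\operatorname{int}(g)\circ\cE_B = \cE_{(\Ad(g)(B),C_1,C_2,\ldots)}$ with some nonzero $C_s$. To finish, one should either use an embedding $G\hookrightarrow GL_N$ of exponential type (where the identity $g\,\exp(tB)\,g^{-1}=\exp(t\,\Ad(g)(B))$ is literal), or argue that the isomorphisms $\cC_r(\cN_p(\fg))\simeq V_r(G)$ of axiom~(4) are $G$-equivariant for \emph{all} $r$, which itself needs a word of justification. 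In \cite{F2} this equivariance is part of the package, so the issue is one of what is recorded in these lecture notes rather than a defect in your strategy.
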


\begin{remark}
\label{rem:tw}
In \cite[4.6]{F2}, property (6) was proved under the assumption that  $i: G \hookrightarrow GL_N$
be defined over $\bF_p$.  This is unnecessary, for \cite[1.11]{F2} also does not require $i$ to 
be defined over $\bF_p$.  Namely, one uses the diagram
\begin{equation}
\label{eqn:relateF}
\begin{xy}*!C\xybox{%
\xymatrix{
\bG_a  \ar[r]^{\psi_{\ul B}} & G \ar[d]^{i} \ar[r]^{F} & G^{(1)} \ar[d]^{i^{(1)}} \\
 & GL_N \ar[r]^-{F} & GL_N^{(1)} = GL_N   }
}\end{xy}
\end{equation}
to reduce to verifying both \cite[1.11]{F2} and \cite[4.6]{F2} in the special case $G = GL_N$.
\end{remark}

We repeat a remark which suggests how to formulate support varieties in the category of strict 
polynomial functors.

\begin{remark} \cite[4.7]{F2}
\label{rem-poly}
A special case of Theorem \ref{thm:G-items} is the case $G = GL_n$ and $M$ a polynomial $GL_n$-module
homogenous of some degree as in Example \ref{ex:groupalg}.5.  In particular, Theorem \ref{thm:G-items} 
provides a theory of support varieties for modules over the Schur algebra $S(n,d)$ for $n \geq d$.
\end{remark}

A few examples of such support varieties are given in \cite{F2}.  We give another interesting
example here, an infinite dimensional version of J. Carlson's $L_\zeta$-modules.

\vskip .1in

We consider a class of examples $Q_\zeta$ 
associated to  rational cohomology classes $\zeta \in H^\bu(G,k)$.  
Given a linear algebraic group $G$ and some choice of  injective resolution
$k \to I^0 \to I^1 \to  \cdots \to I^n \to \cdots$ of rational $G$-modules, we set 
$\Omega^{-2d}(k) $ equal to the quotient of $I^{2d-1}$ modulo the image of $I^{2d-2}$.   
The restriction of $\Omega^{-2d}(k)$ to some Frobenius 
kernel $G_{(r)}$ of $G$ is equivalent in the stable category of $G_{(r)}$-modules to 
$\Omega_{(r)}^{-2d}(k)$ defined as the quotient of  $I_{(r)}^{2d-1}$ modulo the image of $I_{(r)}^{2d-2}$ for a
minimal injective resolution $k \to I_{(r)}^0 \to \cdots \to I_{(r)}^n \to \cdots$ of rational $G_{(r)}$-modules.

\begin{prop}
\label{prop:qzeta}
Let $G$ be a linear algebraic group equipped with a structure of exponential type.
Consider a  rational cohomology 
class $\zeta \in H^{2d}(G,k)$ represented by a map $\tilde \zeta: k \to \Omega^{-2d}(k)$
of rational $G$-modules.  We define $Q_\zeta$ to be the cokernel of $\tilde \zeta$, 
thus fitting in the short exact sequence of rational $G$-modules
\begin{equation}
\label{seqQ}
0 \to k \stackrel{\widetilde \zeta}{\to} \ \Omega^{-2d}(k) \ \to Q_\zeta \to 0.
\end{equation}
Then 
$$V(G)_{Q_\zeta} \quad = \quad  \bigcup_r \{ \cE_{\ul B} \in V_r(G): 
(\alpha_{\Lambda_r(\ul B)})^*(\zeta) = 0 \in H^{2d}(k[t]/t^p,k)  \}.$$
\end{prop}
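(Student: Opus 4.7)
The plan is to reduce the computation of $V(G)_{Q_\zeta}$ at a $1$-parameter subgroup $\cE_{\ul B}$ to a classical Carlson-type support variety calculation inside some Frobenius kernel $G_{(r)}$, and then to transport the resulting cohomological condition into the form stated in the proposition via the equivalence of $\pi$-points in Proposition \ref{prop:comparepi}.

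First, fix $\cE_{\ul B} \in V(G)$ and choose $r$ large enough that $B_s = 0$ for all $s \geq r$; then both $\alpha_{\ul B}$ and $\alpha_{\Lambda_r(\ul B)}$ factor through $kG_{(r)}$, and $\zeta$ is pulled back from a class $\zeta_{(r)} \in H^{2d}(G_{(r)},k)$. I would restrict the short exact sequence (\ref{seqQ}) to $G_{(r)}$ and, invoking the stable equivalence between $\Omega^{-2d}(k)|_{G_{(r)}}$ and the minimal $\Omega^{-2d}_{(r)}(k)$ recalled in the paragraph preceding the proposition, identify $Q_\zeta|_{G_{(r)}}$, in the stable category of $G_{(r)}$-modules, with the Carlson-type cokernel $Q_{\zeta_{(r)}}$ associated to $\zeta_{(r)}$. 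This reduction is the principal obstacle: one must verify that the restriction of the representing cocycle $\wt\zeta: k \to \Omega^{-2d}(k)$ is stably identified with the Carlson representative $\wt{\zeta_{(r)}}: k \to \Omega^{-2d}_{(r)}(k)$, which amounts to a careful functoriality check for restriction of $G$-injective resolutions to $G_{(r)}$-injective (equivalently, projective) resolutions modulo summands.

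Next, I would apply the standard Carlson-type computation in the finite group scheme $G_{(r)}$: for any $\pi$-point $\beta: K[t]/t^p \to KG_{(r)}$, pulling back the defining short exact sequence for $Q_{\zeta_{(r)}}$ and chasing the long exact sequence for $\Ext^*_{K[t]/t^p}(K,-)$ shows that $\beta^*(Q_{\zeta_{(r)}})$ fails to be free as a $K[t]/t^p$-module if and only if the degree-$2d$ connecting map, which acts as multiplication by $\beta^*(\zeta_{(r)})$, vanishes, i.e.\ iff $\beta^*(\zeta_{(r)}) = 0 \in H^{2d}(K[t]/t^p,K)$.

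Finally, I would apply Proposition \ref{prop:comparepi}, which yields equivalences of $\pi$-points $\alpha_{\Lambda_r(\ul B)} \sim \mu_{\ul B}$ and, with $\ul B$ replaced by $\Lambda_r(\ul B)$, $\alpha_{\ul B} \sim \mu_{\Lambda_r(\ul B)}$. Since equivalent $\pi$-points correspond to the same homogeneous prime in $H^\bu(G_{(r)},k)$, the vanishing condition $\beta^*(\zeta_{(r)})=0$ is constant on equivalence classes, and similarly pullback freeness of any $G_{(r)}$-module is an equivalence-class invariant. Chasing these equivalences together with the previous step translates membership of $\cE_{\ul B}$ in $V(G)_{Q_\zeta}$, namely non-freeness of $\alpha_{\ul B}^*(Q_\zeta)$, into the cohomological vanishing $\alpha_{\Lambda_r(\ul B)}^*(\zeta)=0$ as required; taking the union over all sufficiently large $r$ yields the full identification.
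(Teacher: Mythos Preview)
Your proposal is correct and follows essentially the same route as the paper: restrict $Q_\zeta$ to $G_{(r)}$, identify it stably with the Carlson-type module $Q_{\zeta_r}$, compute its support as the vanishing locus of the pulled-back class, and translate via Proposition~\ref{prop:comparepi}. The only substantive difference is in the middle step: rather than arguing directly with the long exact sequence as you do, the paper passes through Carlson's module $L_{\zeta_r}$ by exhibiting a distinguished triangle identifying $Q_{\zeta_r}$ with $\Omega_{(r)}^{-2d-1}(L_{\zeta_r})$, and then quotes \cite[3.7]{FPv2} for the description of $V(G_{(r)})_{L_{\zeta_r}}$. Your direct argument is more self-contained, though your phrase ``the degree-$2d$ connecting map, which acts as multiplication by $\beta^*(\zeta_{(r)})$'' is a bit loose; the cleaner way to say it is that after pulling back along $\beta$ the map $\widetilde{\zeta_r}$ becomes, in the stable category of $K[t]/t^p$-modules, a map $K \to \Omega^{-2d}(K)\simeq K$ which is zero precisely when $\beta^*(\zeta_r)=0$, and its stable cokernel is $\beta^*(Q_{\zeta_r})$.
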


\begin{proof}
Since $\Omega_{(r)}^{-2d}(k)$ is stably equivalent as a $G_{(r)}$-module to the restriction to
$G_{(r)}$ of the rational $G$-module $\Omega^{-2d}(k)$, we conclude that the restriction of 
$Q_\zeta$ to $G_{(r)}$ is stably equivalent to the finite dimensional $G_{(r)}$-module
$Q_{\zeta_r}$ (associated to $\zeta_r \in H^{2d}(G_{(r)},k)$, the restriction of $\zeta$)
fitting in the short exact sequence 
$$0 \to k \to \ \Omega_{(r)}^{-2d}(k) \ \to Q_{\zeta_r} \to 0.$$
By definition, the Carlson $L_{\zeta_r}$-module introduced in \cite{Ca1} fits in the distinguished triangle
$$ L_{\zeta_r} \ \to \ \Omega^{2d}_{(r)}(k) \ \to \ k \to\  \Omega_{(r)}^{-1}(L_\zeta) $$
whose $[-2d]$-shift is  the distinguished triangle 
$$ \Omega^{-2d}_{(r)}(L_{\zeta_r}) \ \to \ k \ \to \Omega^{-2d}_{(r)}(k) \ \to \ \Omega^{-2d-1}_{(r)}(L_{\zeta_r}). $$
Thus, we conclude that  $Q_{\zeta_r}$ is stably equivalent to $\Omega^{-2d-1}_{(r)}(L_{\zeta_r}),$
and thus has the same support as a $G_{(r)}$-module as the support of $L_{\zeta_r}$. 

Observe that $V(G)_{Q_{\zeta}} \cap V_rG)$ equals $V(G_{(r)})_{Q_{\zeta_r}}$.
The identification of $V(G)_{L_\zeta}$ now follows from  \cite[3.7]{FPv2} which asserts that 
$$V(G_{(r)})_{L_{\zeta_r}} \ = \ \{ \mu: \bG_{a(r)} \to G_{(r)}, (\mu \circ \epsilon_r)^*(\zeta_r) = 0 \}.$$ 
\end{proof}

\vskip .2in

\subsection{Classes of rational $G$-modules}

The $\pi$-point approach to support varieties for a
finite group scheme $G$ naturally led to the formulation of the class of $G$-modules
of constant Jordan type (and more general modules of constant $j$-type for $1 \leq j < r$).   
For an infinitesimal group scheme $G$, 
J. Pevtsova and the author in \cite{FPv4} showed how to construct various vector
bundles on $V(G)$ associated to a $G$-module $M$ of constant Jordan type.
 We refer the reader to the book 
by D. Benson \cite{Ben} and the paper  by D. Benson and J. Pevtsova \cite{BenPv}
for an exploration of vector bundles constructed in this manner 
for elementary abelian $p$-groups $E \simeq (\bZ/p)^{\times s}$.  This is a ``special case"
of an infinitesimal group scheme because the representation theory of $(\bZ/p)^{\times s}$ is that of 
the height 1 infinitesimal group scheme $\bG_{a(1)}^{\times s}$.
We also mention that J. Carlson, J. Pevtsova, and the author introduced in \cite{CFP2} a 
construction which produced vector bundles on Grassmann varieties associated to 
modules of constant Jordan type as well as to more general modules, those of constant $j$-type.

In this subsection, we briefly mention three interesting classes of (infinite dimensional) 
$G$-modules for $G$ a linear algebraic group of exponential type.  Consideration of
special classes of $G$-modules is one means of obtaining partial understanding of
the wild category $Mod_k(G)$.

Throughout this subsection $G$ will denote a linear algebraic group of exponential type.

\begin{defn}
\label{mockinj}
We say that $M$ is mock injective if $M$ is not injective but $V(G)_M = 0$.  
\end{defn}

As the author showed in \cite{F3}, using results of E. Cline, B. Parshall, and L. Scott \cite{CPS1} on
the relationship of induced modules (see Definition \ref{defn:induced}) to injectivity, such mock 
injectives exist for any unipotent algebraic group which is of exponential type.
Necessary and sufficient conditions on $G$ for the existence of mock
injectives can be found in \cite{HNS}, once again using induction.

\begin{defn}
\label{mocktrivial}
We say that $M$ is mock trivial if the local action of $G$ on $M$ is trivial for all
1-parameter subgroups $\cE_{\ul B}: \bG_a \to G$.  
\end{defn}

In \cite{F3}, the author shows how to construct mock trivial $G$-modules for any
$G$ which is not unipotent.

\begin{defn}
\label{mockexpdeg}
We say that $M$ is of mock exponential degree $< p^r$ if there exists some $r > 0$ such
that $V(G)_M = \Lambda_r^{-1}(V(G_{(r)})_M)$.
\end{defn}

This class of $G$-modules includes all finite dimensional $G$-modules.

\vskip .2in

\subsection{Some questions of possible interest}

In this final subsection, we mention some questions which might interest the reader,
some of which concern the special classes of  $G$-modules
defined in the previous subsection.

\begin{question}  
For certain linear algebraic groups $G$ (e.g., $\bG_a$), can one describe the monoid
(under tensor product) of mock injective $G$-modules with 1-dimensional socle?
\end{question}

\begin{question}
Can one characterize $G$-modules of bounded mock exponential degree 
using $G$-modules which are extensions of  mock injective modules by finite dimensional modules?
\end{question}

\begin{question}
What conditions on a subset $X \subset V(G)$ imply good properties of the subcategory of 
$Mod_k(G)$ consisting of those $G$-modules $M$ with $V(G)_M \subset X$ for some $G$-module $M$? 
\end{question} 

\begin{question}
What are (necessary and/or sufficient) conditions on a subset $X \subset V(G)$ 
to be of the form $V(G)_M$? 
\end{question} 

\begin{question}
Do there exist rational cohomology classes $\zeta \in H^d(G,k)$ for some $G$ and
some $d > 0$ which are not nilpotent but which satisfy the condition that 
$\cE_{\ul B}^*(\zeta) = 0 \in H^*(k[T]/T^p,k)$ for all $\cE_{\ul B} \in V(G)$?
\end{question} 

\begin{question}
As in \cite{F3}, we have natural filtrations of $k[G]$ by subcoalgebras \\
$C \subset k[G]$.   Especially for
$G$ unipotent, can we prove finiteness theorems for \\
$Ext_{C-coMod}^*(M,M)$ for $M$ a finite dimensional rational $G$-module which inform
questions about $Ext^*_G(M,M)$?
\end{question}
\vskip .2in

We conclude with a possible ``improvement" of our support theory $M \ \mapsto \ V(G)_M$
for linear algebraic groups of exponential type.
The formulation of $V(G)$ presented in this text  (and in 
\cite{F2}) is that of a colimit \ $\varinjlim_r V_r(G)$, \ where $V_r(G) \simeq \cC_r(\cN_p(\fg))$. 

What follows is an alternative support theory, $M \ \mapsto \ \widehat{V(G)}_M$.  We remind
the reader of the affine scheme $V_r(G)$ given in Definition \ref{defn:1par-scheme} for any
affine group scheme $G$: the set of $A$-points of $V_r(G)$ is the set of the morphisms of group 
schemes  $\bG_{a(r),A} \to G_A$ over $Spec A$.

\begin{defn}
\label{defn:pro-V}
Let $G$ be a linear algebraic group.  For each $r>0$, we define the restriction morphism
$V_{r+1}(G) \to V_r(G)$ by restricting the domain of a height $r+1$ \\ 1-parameter subgroup
$\bG_{a(r+1),A} \to G_A$ to $\bG_{a(r),A} \subset \bG_{a(r+1),A}$.  

Thus, $\{ V_r(G), r > 0 \}$ is a pro-object of affine schemes.
\end{defn}

If $G$ is a linear algebraic group of exponential type, the restriction map $V_{r+1}(G) \to V_r(G)$ 
is given by  the projection $\cC_{r+1}(\cN_p(\fg)) \to  \cC_r(\cN_p(\fg))$ onto the first $r$ factors.   

\begin{defn}
\label{defn:formal}
For $G$ a linear algebraic group of exponential type, we define 
$$\widehat{V(G)} \ = \varprojlim_r \{ V(G_{(r)})(k) \}$$
equipped with the topology of the inverse limit of the Zariski topologies on the 
sets of $k$-points $V(G_{(r)})(k)$.  

We view an element of $\widehat{V(G)}$ as a ``formal 1-parameter subgroup"
 given as an infinite product 
$\widehat{\cE_{\ul B}}  = \prod_{s =0}^\infty \cE_{B_s}\circ F^s$.
\end{defn}

One proves the following proposition by using the following observation: for any
coaction $\Delta_M: M \to M \otimes k[G]$ and any $m \in M$, there exists a positive integer
$s(m)$ such that the composition 
$$u_s \circ \cE_B^* \circ \Delta_M: M \to M\otimes k[G] \to M\otimes k[T] \to k$$
vanishes on $m$ for all $B \in \cN_p(\fg)$ and all $s \geq s(m)$.

\begin{prop}
\label{prop:formal}
Let $G$ a linear algebraic group of exponential type and $M$ a $G$-module.   
Then for any $\widehat{\cE_{\ul B}} \in \widehat{V(G)}$ and any $m \in M$, 
the infinite sum $\sum_{s = 0}^\infty (\cE_{B_s})_*(u_s)$ applied to $m$ is finite
(i.e. $ (\cE_{B_s})_*(u_s)$ applied to $m$ vanishes for $s >> 0$).

Consequently, $\sum_{s = 0}^\infty (\cE_{B_s})_*(u_s)$ defines a $p$-nilpotent 
operator $\psi_{\ul B,M}: M \to M$.  We define
$$\widehat{V(G)}_M \ \equiv \ \{ \widehat{\cE_{\ul B}}: \  \text{not \ all blocks of } \psi_{\ul B,M} \
{\text have \ size} \ p \}.$$
\end{prop} 

We conclude with the following questions concerning \ $M \mapsto \widehat V(G)_M$. 

\begin{question}
Does use of formal 1-parameter subgroups provide necessary and sufficient conditions for
injectivity of a $G$-module?

Does use of formal 1-parameter subgroups provide necessary and sufficient conditions for
a $G$-module to be of bounded exponential degree?
\end{question}

\vskip 1in

%%%%%%%%%%%%%%%%%%%%%%%%%%%%%
%%%%%%%%%%%%%%%%%%%%%%%%%%%%%


\begin{thebibliography}{20}

\bibitem{AJ} H. Andersen and J. Jantzen, {\em Cohomology of induced representations for algebraic groups} 
Math. Ann. {\bf 269} (1985), 487-525.

\bibitem{AE1} J. Alperin and L. Evens, {Representations resolutions, and Quillen's dimension theorem}, 
J. Pure and Appl Alg. {\bf 22} (1981),  1 --9.

\bibitem{AE2} J. Alperin and L. Evens, {Varieties and elementary abelian groups}, 
J. Pure and Appl Alg. {\bf 26} (1982),  221-227.

\bibitem{AS} G. Avrunin, S. Scott, {Quillen stratification for modules}, Invent. Math {\bf 66} (1982), 277-286.

\bibitem{ABS} H. Azad, M. Barry, G Seitz, {\em On the structure of parabolic subgroups}, Comm. Algebra {\bf 18}
(1990), 551-562.


\bibitem{Ben} D. Benson, {\em  Representations of elementary abelian $p$-groups and vector bundles}. CUP, 2016.

\bibitem{BenPv} D. Benson and J. Pevtsova, {\em A realization theorem for modules of constant 
Jordan type and vector bundles}, Trans. Amer. Math. Soc. {\bf 364} (2012), 6459-6478.

\bibitem{BR} P. Bardsley, R. Richardson, {\em \'Etale slices for algebraic transformation groups in characteristic $p$}.
Proc. London Math Soc {\bf 51} (1985), 295-317.

\bibitem{BD} V. Bondarko and V. Drozd, {\em The representation type of finite groups}, Zap. Nauncn Sem. Leningrad. Otdel
Math. INst. Steklov (LOMI) {\bf 71} (1977), 24 - 41.

\bibitem{Borel} A. Borel, {\em Sur la cohomologie des espaces fibres principaux et des espaes homogenenes de groupes de Lie compacts}, 
Ann. of Math. {\bf 57} (1953), 115 -- 207.

\bibitem{Ca1} J. Carlson, {\em The varieties and the cohomology ring of a module}, J. Algebra {\bf 85} (1983), 101-143.

\bibitem{Ca2} J. Carlson, {\em The variety of an indecomposable module is connected}, Invent. Math {\bf 77} (1984), 291-299.

\bibitem{CFP} J. Carlson, E. Friedlander, J. Pevtsova, {\em Modules of constant Jordan type}, J. f\"ur die reine und angewandte 
Mathematik {\bf 614} (208), 191--234.

\bibitem{CFP2} J. Carlson, E. Friedlander, J. Pevtsova, {\em Vector bundles associated to Lie
Algebras}, J. Reine Angew. Math {\bf 716} (2016), 147-178.

\bibitem{CLN} J. Carlson, Z. Lin, and D. Nakano, {Support varieties for modules over Chevalley groups and
classical Lie algebras}, Trans. A.M.S. {\bf 360} (2008), 1870 - 1906.

\bibitem{CPS1} E. Cline, B. Parshall, L. Scott, {\em Induced modules and affine quotients}, Math. Ann {\bf 230} 1977), 1 -14.

\bibitem{CPS2} E. Cline, B. Parshall, L. Scott, {\em Cohomology, hyperalgebras, and representations}, 
J.Algebra {\bf 63} (1980), 98-123.
%
%\bibitem{CPS2}  E. Cline, B. Parshall, L. Scott, {\em on injective modules for infinitesimal algebraic groups, I},
%J. London Math. Soc. {\bf 31} (1985), 277-291.


\bibitem{CPSvdK}  E. Cline, B. Parshall, L. Scott, W. van der Kallen, {Rational and generic cohomology}, 
Invent. Math {\bf 39} (1977), 143-163.



%\bibitem{Ch} W. Chin, {\em A brief introduction to coalgebra representation theory}, Hopf algebras, 
%Lec. Notes in Pure and Appl. Math. {\bf 237} (2009), 109-131.

\bibitem{Donk} S. Donkin, {\em On projective modules for algebraic groups}, J. London Math. Soc. (2) (1996), 75-88.

\bibitem{Ev} L. Evens, {\em The cohomology ring of a finite group}, Trans. A.M.S. {\bf 101} (1961), 224-239.



\bibitem{F1} E. Friedlander, {\em Restrictions to $G(\bF_p)$ and $G_{(r)}$ of rational $G$-modules},
Compos. Math. {\bf 147} (2011), no. 6, 1955--1978.

\bibitem{F2} E. Friedlander, {\em Support varieties for rational representations}, Compositio Math {\bf 151} (2015),
765-792.

\bibitem{F3} E. Friedlander, {\em Filtrations, 1-parameter subgroups, and rational injectivity}, arXiv:408.2918

%\bibitem{F4} E. Friedlander, {\em Rational cohomology of unipotent group schemes}, arXiv:1702.04831

%\bibitem{F5} E. Friedlander, {\em Rational cohomology of support varieties for algebraic groups}.  In preparation.



%
%\bibitem{F2} E. Friedlander, {\em Spectrum of group cohomology and support varieties}, J. K-Theory {\bf 11} (2013)
%507-516.


\bibitem{FPar1} E. Friedlander, B. Parshall, {Geometry of $p$-Unipotent Lie Agebras}, J. Algebra {\bf 109} (1987), 25-45.

\bibitem{FPar2} E. Friedlander, B. Parshall, {On the cohomology of algebraic and related finite groups}, Invent. Math {\bf 74}
(1983), 85-117.

\bibitem{FPar3} E. Friedlander, B. Parshall, {Cohomology of infinitesimal and discrete groups}, Math. Ann {\bf 273}
(1986), 353-374.

\bibitem{FPar4} E. Friedlander, B. Parshall, {Modular representation theory of restricted Lie algebras}, Amer. J. Math. {\bf 110}
(1988), 1055-1094.

\bibitem{FPv1} E. Friedlander, J. Pevtsova, {\em Representation-theoretic support spaces for finite group schemes}, 
Amer . J. of Math. {\bf 127} (2005), 379-420.

\bibitem{FPv2} E. Friedlander, J. Pevtsova, {\em $\Pi$-supports for modules 
for finite group schemes},  Duke. Math. J. {\bf 139} (2007), 317--368.

\bibitem{FPv3} E. Friedlander, J. Pevtsova, {\em Generalized support varieties for finite group
schemes}, Documenta Math (2010), 197-222.

\bibitem{FPv4} E. Friedlander, J. Pevtsova, {\em Constructions for infinitesimal group schemes},
Trans. A.M.S. {\bf 363} (2011), 6007-6061.

\bibitem{FPS} E. Friedlander, J. Pevtsova, A. Suslin, {\em Generic and  Jordan types},
Invent math {\bf 168} (2007), 485-522.

\bibitem{FS} E. Friedlander, A. Suslin, {Cohomology of finite group schemes over a field}, Invent. Math. 
{\bf 127}(1997), 209-270.
%
%\bibitem{G-G} S. Garibaldi and R. Guralnick, Personal communication.
%
%\bibitem{G} J. A. Green, {\em Polynomial represenations of $GL_n$}, Springer-Verlag Lecture Notes
%in Math 830, 1980.

\bibitem{HNS} W. Hardesty, D. Nakano, P. Sobaje, {\em On the existence of mock injective
modules for algebraic groups}, arXiv:1604.03840.

\bibitem{Har} R. Hartshorne, {\em Algebraic geometry}. Graduate Texts in Mathematics {\bf 52}, Springer-Verlag, 1977.
%
%\bibitem{H} S. Herpel, {\em On the smoothness of centralizers in reductive groups}, Trans. A.M.S. {\bf 365} (2012),
%3753-3774.

\bibitem{HS} G. Hochschild, J.-P. Serre, {\em Cohomology of group extensions}, Trans. A.M.S. {\bf 74} (1953), 110-134.

\bibitem{Jac} N. Jacobson, {\em Lie Algebras}, Interscience tracts in Pure and Applied Mathematics
{\bf 10}, Wile \& Sons, 1962.

\bibitem{J1} J.C. Jantzen, {\em Kohomologie von $p$-lie Algebren und nilpotente Elemente}, Amh. Math .Sem. Univ. Hamburg
{\bf 56} (1986), 191-219.

\bibitem{J} J.C. Jantzen, {\em Representations of Algebraic groups, Second edition}. Math Surveys and Monographs {\bf 107},
A.M.S., 2003.

\bibitem{Kem} G.R. Kempf, {\em Linear systems on homogenous spaces}, Annals of Math {\bf 103}
(1976), 557-591.

\bibitem{Kudo} T. Kudo, {\em A transgression theorem}, Mem. Fac. Sci. Kyusyu Univ {\bf 9} (1956), 79-81.

\bibitem{L} S. Lang, {\em Algebraic groups over finite fields}, Amer. J. Math. {\bf 78} (1956), 555-563.


\bibitem{MacL} S. MacLane, {\em Homology}.  Grundlehren {\bf 114}, Springer, 1967.

\bibitem{MT}  G. Malle and D. Testerman, {\em Linear algebraic groups and finite groups of Lie type}.
Cambridge University Press, 2011.

\bibitem{M} H. Matsumura, {Commutative Algebra}.  Benjamin, 1970.

\bibitem{May} J. P. May, {\em The cohomology of restricted Lie algebras and of Hopf algebras}, 
J. Algebra {\bf 3} (1966), 123-146.

%\bibitem{McN} G. McNinch, {Abelian unipotent subgroups of reductive groups}, J. Pure and Appl  Algebra
%{\bf 167} (2002) 269-300.

%\bibitem{McN} G. McNinch, {Optimal $SL(2)$ homomorphism}, Comment. Math. Helv. {\bf 80} (2005),
%391-426.
%
%\bibitem{NPV} D. Nakano, B.J. Parshall, D.C. Vella, {\em Support varieties for algebraic groups}, 
%J. Reine Angew. Math. {\bf 547} (2002), 15-49.
%
%\bibitem{O} V. Ostrik, {\em Cohomological supports for quantum groups}, Funct. Anal. Appl. {\bf 32} (1998),
%237-246.
%
%\bibitem{P} J. Pevtsova, {\em Infinite dimensional modules for Frobenius kernels}, J. Pure Appl. Algebra {\bf 173}
%(2002), 59 - 86.

%\bibitem{PV} L. Positselski, A. Vishik, {\em Koszul Duality and Galois cohomology}, Math. Res. Lett. {\bf 2} (1995),
%771-781.
%
\bibitem{Q1} D. Quillen, {\em The spectrum of an equivariant cohomology ring: I}, Ann. of Math {\bf 94}(1971), 549-572.

\bibitem{Q2} D. Quillen, {\em The spectrum of an equivariant cohomology ring: II}, Ann. of Math {\bf 94}(1971), 573-602.

\bibitem{Q3} D. Quillen, {\em On the cohomology and K-theory of the general linear groups over a finite field},
Ann. of Math {\bf 96}(1972), 552-586.

\bibitem{Q-V} D. Quillen, B. Venkov, {\em Cohomoloy of finite groups and elementary abelian subgroups},
Topology {\bf 11} (1972), 317-318.

%\bibitem{Rich} R.W. Richardson, Jr, {\em Conjugacy classes in Lie algebras and algebraic groups}, Ann. of Math  {\bf 86}
%(1967), 1 - 15.
%
%%\bibitem{R} M. Rosenlicht, {\em On the quotient varieties and the affine embedding of certain homogeneous spaces},
%%Trans. Amer. Math. Soc. {\bf 101} (1961), 211-223.

\bibitem{Serre} J.-P. Serre, {\em Sur la dimension cohomlogique des groupes profinis}, Topology {\bf 3} (1965), 
413-420.

\bibitem{Sei} G. Seitz, {\em Unipotent elements, tilting modules, and saturation},
Invent. Math. {\bf 141} (2000), 467-502.


\bibitem{S1}  P. Sobaje, {\em Support varieties for Frobenius kernels of classical groups}. J. Pure and Appl. 
Algebra {\bf 216} (2013), 2657-2664.

%\bibitem{S2} P. Sobaje,  {\em On exponentiation and infinitesimal one-parameter subgroups of reductive groups},
%J. Algebra {\bf 385} (2013), 14-26.

\bibitem{S3} P. Sobaje, {\em Exponentiation of commuting nilpotent varieties}, J. Pure and Appl. Algebra {\bf 219},
(2015), 2206 - 2217.

\bibitem{St} R. Steinberg, {\em Representations of algebraic groups}, Nagoya Math. J. {\bf 22} (1963), 33-56.

\bibitem{Sull} J.B. Sullivan, {Representations of the hyperalgebra of an algebraic group}, 
Amer. J. Math {\bf 100} (1978), 643-652.

\bibitem{SFB1} A. Suslin, E. Friedlander, C. Bendel,
{\em Infinitesimal 1-parameter subgroups and cohomology},
J. Amer. Math. Soc. {\bf 10} (1997), 693-728.

\bibitem{SFB2} A. Suslin, E. Friedlander, C. Bendel, {\em Support
varieties for infinitesimal group schemes},  J. Amer. Math. Soc.
{10} (1997), 729-759.

\bibitem{Swan} R. Swan, {Groups with no odd dimensionlal cohomology}, J algebra {\bf 17} (1971), 401-403. 
%
%\bibitem{ZS} O Zariski, P. Samuel, {\em Commutative Algebra II}, D. van Nostrant, 1960.

\bibitem{V} B. Venkov, {\em Cohomology algebras for some classifying spaces}, Dokl. Akad. Nauk. SSSR {\bf 127}
(1959), 943-944.

\bibitem{Wat} W. Waterhouse, {Introduction to affine group schemes}.  Graduate Texts in Mathematics {\bf 66},
Springer-Verlag, 1979.

\bibitem{Wei} C. Weibel, {An introduction to homological algebra}. Cambridge Studies in Advanced Math {\bf 38},
Cambridge U. Press, 1994.

\end{thebibliography}
\end{document}